\documentclass[12pt]{amsart}

\usepackage{amsmath,amsthm,amssymb,graphicx,tikz,setspace,verbatim}
\usepackage[margin=1in]{geometry}
\usepackage[hidelinks]{hyperref}

\newtheorem{theorem}{Theorem}[section]
\newtheorem{lemma}[theorem]{Lemma}
\newtheorem{proposition}[theorem]{Proposition}
\newtheorem{corollary}[theorem]{Corollary}

\theoremstyle{definition} 
\newtheorem{definition}[theorem]{Definition}
\newtheorem{example}[theorem]{Example}
\newtheorem{remark}[theorem]{Remark}
\newtheorem{conjecture}[theorem]{Conjecture}

\DeclareMathOperator{\Cob}{2Cob}

\DeclareMathOperator{\ext}{ext}

\DeclareMathOperator{\gen}{gen}
\DeclareMathOperator{\id}{id}
\DeclareMathOperator{\im}{im}
\DeclareMathOperator{\inn}{in}
\DeclareMathOperator{\intt}{int}

\DeclareMathOperator{\open}{open}
\DeclareMathOperator{\out}{out}

\DeclareMathOperator{\Rep}{Rep}
\DeclareMathOperator{\rk}{rank}

\DeclareMathOperator{\spann}{span}

\DeclareMathOperator{\SAlg}{SAlg}

\newcommand{\C}{\mathbb{C}}
\newcommand{\Dc}{\mathcal{D}}
\newcommand{\F}{\mathbb{F}}
\newcommand{\gl}{\mathfrak{gl}}
\newcommand{\Lc}{\mathcal{L}}
\newcommand{\Pc}{\mathcal{P}}
\newcommand{\psl}{\mathfrak{psl}}
\newcommand{\Q}{\mathbb{Q}}

\newcommand{\Sc}{\mathcal{S}}

\newcommand{\Z}{\mathbb{Z}}
\newcommand{\Zb}{\mathbf{Z}}
\newcommand{\Zc}{\mathcal{Z}}

\setstretch{1.15}

\title[Surface gluing with signs and gradings]{Surface gluing with signs and gradings in decategorified Heegaard Floer theory}
\author[Andrew Manion]{Andrew Manion}
\address{Department of Mathematics, North Carolina State University, 2108 SAS Hall, Raleigh, NC 27695}
\email{ajmanion@ncsu.edu}

\begin{document}

\begin{abstract}
    A previous result about the decategorified bordered (sutured) Heegaard Floer invariants of surfaces glued together along intervals, generalizing the decategorified content of Rouquier and the author's higher-tensor-product-based gluing theorem in cornered Heegaard Floer homology, was proved only over $\F_2$ and without gradings. In this paper we add signs and prove a graded version of the interval gluing theorem over $\Z$, enabling a more detailed comparison of these aspects of decategorified Heegaard Floer theory with modern work on non-semisimple 3d TQFTs in mathematics and physics.
\end{abstract}

\maketitle

\tableofcontents

\section{Introduction}

In \cite{ManionRouquier}, Rapha{\"e}l Rouquier and the author reformulated and extended work of Douglas--Manolescu \cite{DM} to prove a more structured gluing theorem for the algebras that bordered sutured Heegaard Floer homology \cite{LOTBorderedOrig,Zarev} assigns to surfaces. The gluing theorem of \cite{ManionRouquier} is based on a new tensor product construction for higher representations of a particular dg monoidal category  \cite{KhovOneHalf} related to $\mathfrak{gl}(1|1)^+$.  Given such a gluing theorem, it is natural to look for a TQFT interpretation. Some preliminary remarks to this end, focusing on the connection with 2d open-closed cobordisms, appear in \cite[Section 7.2.5]{ManionRouquier}. 

More can be said once one passes from the full version of bordered sutured Heegaard Floer theory to its decategorification, in which the higher tensor products become ordinary tensor products of representations of a Hopf algebra. Indeed, in \cite{ManionDHA}, the author proved a gluing theorem for decategorified bordered sutured Heegaard Floer invariants of surfaces, generalizing the decategorification of the gluing theorem from \cite{ManionRouquier} and placing it in a more flexible and TQFT-like setting. However, the results of \cite{ManionDHA} were only formulated and proved without gradings and with coefficients in $\F_2$, where one can ignore plus and minus signs (this was reasonable in relationship with \cite{ManionRouquier} where the grading structure does not readily lead to a decategorification over $\Z$). 

One reason to add gradings and signs to the results of \cite{ManionDHA} is to bring decategorified bordered sutured Heegaard Floer theory more in line with the recently active area of non-semisimple 3d TQFT \cite{ADO, CGPM, Mikhaylov, BCGPM, GPV, AGPS, GPPV, GHNPPS, CGP-NSS, Jagadale, GeerYoung}, especially the theory called $\psl(1|1)$ Chern--Simons theory by Mikhaylov \cite{Mikhaylov} and whose homological truncation is given a proposed mathematical description in Geer--Young's $\Dc^{q,\intt}$ TQFT for generic $q$ \cite{GeerYoung}. Based on the connection between Heegaard Floer homology and $\gl(1|1)$, as explained from a unified perspective in \cite[Section 3.3]{GPV}, one hopes that ideas from decategorified bordered sutured Heegaard Floer theory can advance the study of these non-semisimple 3d TQFTs and vice-versa. However, it is difficult to compare these TQFTs with \cite{ManionDHA} directly; they are defined over $\C$ rather than $\F_2$, and gradings are a key aspect of \cite{Mikhaylov,GeerYoung} but are absent in \cite{ManionDHA}. See \cite[Section 5]{CGP-NSS} for a discussion of the important role played by gradings in non-semisimple 3d TQFT in general.

The main result of this paper is a version with gradings and signs of the generalized gluing theorem from \cite{ManionDHA}. To state it, let $\Cob^{\ext}$ denote Lauda--Pfeiffer's open-closed cobordism category \cite{LaudaPfeiffer}; informally, objects $M$ of $\Cob^{\ext}$ are disjoint unions of oriented circles and intervals, and morphisms are 2d open-closed cobordisms. If $F \colon M_1 \to M_2$ is such a morphism, we will write\footnote{As mentioned in \cite{ManionRouquier, ManionDHA} and reviewed below, a morphism $F \colon M_1 \to M_2$ in $\Cob^{\ext}$ can be canonically identified with a sutured surface $(F,\Lambda,S_+,S_-)$ in the sense of Zarev \cite[Definition 1.2]{Zarev}, allowing closed components of $F$ as well as components of $\partial F$ that are disjoint from $\Lambda$ and contained fully in $S_+$ or $S_-$, and equipped with a partition of $S_+$ into ``source'' $M_1$ and ``target'' $M_2$ along with an ordering of the components of $M_i$ for $i \in \{1,2\}$.} $S_+ = M_2 \sqcup (-M_1)$ for the subset of $\partial F$ corresponding to the source $M_1$ and target $M_2$ of $F$. For an object $M$ of $\Cob^{\ext}$ we will write $A(M)$ for the tensor product, over all interval components of $M$, of $\Z$-graded\footnote{In general, when considering $\Z$- or $\mathbb{Q}$-graded super abelian groups in this paper, we will not let the grading affect the signs in symmetrizers; only the parity as a super abelian group will contribute.} super rings $\Z[E]/(E^2) = U^{\Z}(\gl(1|1)^+)$ (one for each interval component) where $E$ is odd and has $\Z$-degree $-1$.

Let $F \colon M_1 \to M_2$ be a morphism in $\Cob^{\ext}$ (we think of $M_1$ as being on the right and $M_2$ as being on the left). The decategorification over $\F_2$ of Zarev's bordered sutured Heegaard Floer invariant of $F$ is isomorphic to $\wedge^* H_1(F, S_+ ;\F_2)$. If we let $A^{\F_2}(M_i) := A(M_i) \otimes_{\Z} \F_2$, then decategorifying the higher actions of \cite{ManionRouquier} gives $\wedge^* H_1(F, S_+ ;\F_2)$ the structure of a bimodule over $(A^{\F_2}(M_2),A^{\F_2}(M_1))$ as shown in \cite{ManionDHA}, and we get a functor
\[
\wedge^* H_1(F,S_+;\F_2) \otimes_{A^{\F_2}(M_1)} -
\]
from the category of left modules over $A^{\F_2}(M_1)$ to the category of left modules over $A^{\F_2}(M_2)$. For a composition of morphisms in $\Cob^{\ext}$ that only involves gluing along intervals and not circles, by \cite[Theorem 1.3]{ManionDHA} the bimodule or functor associated to the glued surface is a tensor product or composition of the bimodules or functors associated to the two pieces.

Over $\Z$ we will consider the $\Q$-graded\footnote{When choosing the function $\delta \colon \Sc \to \Q$ below, one could decide to incorporate arbitrarily complicated fractions, or even real or complex numbers after changing the codomain of $\delta$, into its definition. However, the most motivated choices appear to involve only simple fractions like half-integers and $(1/4)$-integers when they involve fractions at all, and if one sticks to such choices then elements of $\Q$ with arbitrarily complicated denominators will not appear.} super abelian groups
\[
\Zb^{S_+}_{\delta,\pi}(F) := \left( \Z^{0|1} \right)^{\otimes \pi(F)} \otimes \wedge^* H_1(F, S_+;\Z) \{ \delta(F) \}
\]
(homology coefficients will be $\Z$ unless otherwise stated) where:
\begin{itemize}
    \item $\Z^{0|1}$ denotes $\Z$ as a purely odd super abelian group concentrated in $\Q$-degree zero;
    \item the exterior algebra without shifts lives in $\Q$-degrees $0,1,\ldots,h$ where 
    \[
    h := \rk H_1(F,S_+)
    \]
    (see equation \eqref{eq:hFormula} below for a combinatorial formula for $h$);
    \item $\delta\colon \Sc \to \Q$ and $\pi \colon \Sc \to \Z/2\Z$, where $\Sc$ is the set of sutured surfaces, are ``degree-shift'' and ``parity'' functions on $\Sc$, chosen from a parametrized family $\Pi$ of such functions that we will describe below;
    \item the brackets $\{ \cdot \}$ denote an upward shift of the $\Q$-degree by the enclosed quantity.
\end{itemize} 
We will define the structure of a bimodule over $(A(M_2),A(M_1))$ on $\Zb^{S_+}_{\delta,\pi}(F)$.

\begin{remark}
    We want to think of the parity shift as being mediated by an explicit prefactor $\left( \Z^{0|1} \right)^{\otimes \pi(F)}$ that appears on one side of $\wedge^* H_1(F,S_+)$ or the other, rather than just an abstract shift like $[\pi(F)]$, because to make the signs work out in the gluing theorem, odd algebra elements trying to act on $\wedge^* H_1(F,S_+)$ should pick up a sign or not based on whether they need to commute past the prefactor.
\end{remark}

\begin{theorem}\label{thm:MainIntro}
Let $M_1$, $M_2$, and $M_3$ be objects of $\Cob^{\ext}$, and assume that $M_2$ has only intervals and no circles. Let
\[
M_3 \xleftarrow{F'} M_2 \xleftarrow{F} M_1
\]
be morphisms in $\Cob^{\ext}$. For any choice in the family $\Pi$ of a pair $(\delta,\pi)$, we have
\[
\Zb^{S_+}_{\delta,\pi}(F' \circ F) \cong \Zb^{S_+}_{\delta,\pi}(F') \otimes_{A(M_2)} \Zb^{S_+}_{\delta,\pi}(F)
\]
as bimodules over $(A(M_3),A(M_1))$.
\end{theorem}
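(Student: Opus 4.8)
The plan is to construct a single natural comparison map and show it is an isomorphism of $(A(M_3),A(M_1))$-bimodules in four stages: (1) the underlying map of super abelian groups, coming from a relative Mayer--Vietoris identification for the gluing; (2) bijectivity, via a rank count fed by that same sequence; (3) compatibility with the $\Q$-grading and the parity, which is where the defining properties of the family $\Pi$ enter; and (4) compatibility with the three algebra actions, where the signs are the delicate point. Throughout I would treat circle components of $M_1$ and $M_3$ as passive: they enlarge the relative homology groups but contribute nothing to $A(M_1)$, to $A(M_3)$, or to the gluing locus, so the argument never needs to touch them beyond bookkeeping. Write $N := F' \circ F = F \cup_{M_2} F'$; then $S_+(N) = M_3 \sqcup (-M_1)$ while $S_+(F) = M_2 \sqcup (-M_1)$ and $S_+(F') = M_3 \sqcup (-M_2)$, and the gluing locus $M_2$ becomes a properly embedded union of $k$ arcs in $N$ whose endpoints lie on $\Lambda(N)$ and hence off $S_+(N)$.

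For stages (1) and (2) I would run the relative Mayer--Vietoris sequence of the triad $(N; F, F')$ with boundary subsets $S_+(F)$ and $S_+(F')$, whose overlap pair is $(M_2, M_2)$ and so has vanishing homology; this gives $H_n(F, S_+(F)) \oplus H_n(F', S_+(F')) \cong H_n(N, S_+(N) \cup M_2)$ for all $n$. Feeding this into the long exact sequence of the triple $(N, S_+(N) \cup M_2, S_+(N))$, and using $H_{>0}(M_2) = 0$, $H_0(M_2) = \Z^k$, produces isomorphisms $H_n(F, S_+(F)) \oplus H_n(F', S_+(F')) \cong H_n(N, S_+(N))$ for $n \ge 2$ together with, in degree $1$, an identification of $H_1(N, S_+(N))$ with the kernel of a boundary map
\[
\partial \colon H_1(F, S_+(F)) \oplus H_1(F', S_+(F')) \longrightarrow H_0(M_2) = \Z^k
\]
recording the difference of the $M_2$-components of the boundaries of the two relative cycles. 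I would then define a comparison map
\[
\Psi \colon \wedge^* H_1(N, S_+(N)) \longrightarrow \wedge^* H_1(F', S_+(F')) \otimes_{A(M_2)} \wedge^* H_1(F, S_+(F))
\]
by taking, on the degree-one part, the inclusion $\ker \partial \hookrightarrow H_1(F, S_+(F)) \oplus H_1(F', S_+(F'))$ followed by the tautological map to the degree-one part of the balanced tensor product, and extending multiplicatively; the extension exists because the image of any $\beta \in \ker \partial$ squares to zero, its cross-terms cancelling against each other via the Koszul sign in the tensor product of superalgebras. Bijectivity of $\Psi$ follows from a rank count: both sides have rank $2^{\,h(F) + h(F') - \rk(\im \partial)}$ — on the right, the $A(M_2) = \bigotimes_j \Z[E_j]/(E_j^2)$-balancing imposes exactly one relation for each arc $I_j \subseteq M_2$ whose class is nonzero in $H_1(F, S_+(F))$ or $H_1(F', S_+(F'))$ — and this count is precisely the Mayer--Vietoris description of $H_1(N, S_+(N))$, arc by arc. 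If preferred, one can reduce to $k = 1$ by performing the gluing one arc at a time, which lightens the sign bookkeeping below.

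For stage (3), the Mayer--Vietoris rank count together with the combinatorial formula \eqref{eq:hFormula} expresses $h(N) = \rk H_1(N, S_+(N))$ in terms of $h(F)$, $h(F')$, $k$, and $\rk(\im \partial)$. The family $\Pi$ is set up precisely so that $\delta$ and $\pi$ are additive under gluing up to corrections imposed by the balancing and by the $\{\cdot\}$-shift, i.e.\ $\delta(N) = \delta(F') + \delta(F) + c_\delta$ and $\pi(N) \equiv \pi(F') + \pi(F) + c_\pi \pmod{2}$, with $c_\delta, c_\pi$ combinatorially determined by the data above; granting these identities, one promotes $\Psi$ to a map that also carries the prefactor $(\Z^{0|1})^{\otimes \pi(N)}$ to a reshuffling of $(\Z^{0|1})^{\otimes \pi(F')} \otimes (\Z^{0|1})^{\otimes \pi(F)}$ and matches the $\Q$-degrees after the shifts $\{\delta(\cdot)\}$. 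Verifying that the chosen $\Pi$ satisfies these identities is a finite calculation once the rank count is in hand, and it is the concrete technical content of this stage; it is also the step that pins down why $\Pi$ had to be defined the way it was.

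Finally, for stage (4), the left $A(M_3)$-action and right $A(M_1)$-action are exterior multiplication by the arc classes of the intervals of $M_3$ and $M_1$, which sit untouched inside $F'$ and $F$; under the kernel decomposition such a class for $J \subseteq M_3$ has the form $(0, a_J^{F'})$ and one for $J \subseteq M_1$ the form $(a_J^{F}, 0)$, so naturality makes $\Psi$ intertwine these actions with the corresponding actions on the balanced tensor product, which touch only the left, respectively right, factor. I expect the signs to be the main obstacle here: an odd generator $E$ acting on the left must commute past $(\Z^{0|1})^{\otimes \pi(N)}$ on $\Zb^{S_+}_{\delta,\pi}(N)$ but past only $(\Z^{0|1})^{\otimes \pi(F')}$ on the relevant factor of the tensor product, and the reshuffling of prefactors and exterior generators built into $\Psi$ contributes further Koszul signs, so making everything cancel requires coordinating the sign normalizations of $\Psi$, of the prefactor reshuffling, and of the bimodule structures on the $\Zb^{S_+}_{\delta,\pi}$, following the principle in the Remark that an odd element picks up a sign exactly when it passes a prefactor. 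Concretely I would fix an ordered basis of $H_1(N, S_+(N))$ adapted to the kernel decomposition, write $\Psi$ and all four actions as explicit signed formulas on the induced monomial bases, and verify the intertwining relations by direct computation; with stages (1)--(3) in place this is bookkeeping, but it is the step most likely to force adjustments to the sign conventions, exactly as anticipated in the Remark.
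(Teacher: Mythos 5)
Your stage-(1) homology identification (excision plus the triple $(N,\,S_+(N)\cup M_2,\,S_+(N))$, giving $H_1(N,S_+(N))\cong\ker\partial$) is correct and uses the hypothesis that $M_2$ has no circles in the right way, and reducing to one arc at a time is also how the paper proceeds. But the comparison map $\Psi$ you build on top of it is the wrong map, and no amount of sign-coordination in stage (4) can fix it. A multiplicative (hence unital) extension must send $1\in\wedge^0 H_1(N,S_+(N))$ to the class of $1\otimes 1$, and $1\otimes 1$ is \emph{zero} in $\Zb^{S_+}_{\delta,\pi}(F')\otimes_{A(M_2)}\Zb^{S_+}_{\delta,\pi}(F)$ whenever some interval $I_j$ of $M_2$ has a basis arc of $F$ or of $F'$ ending on it: taking $x=1$ and $y$ the monomial $e$ with $E_j\cdot e=\pm 1$, the balancing relation $1\cdot E_j\otimes e - 1\otimes E_j e$ reads $0 - (\pm 1\otimes 1)$, so $1\otimes 1\equiv 0$. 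This is the generic situation, so $\Psi$ kills $\wedge^0$ (and much more: in the model case of two disks glued along one interval carrying arcs $e$, $e'$, one finds $\Psi$ of the concatenated arc is $\pm e'\otimes 1\pm 1\otimes e$, which is $0$ or $2(e'\otimes 1)$, never a generator). The correct isomorphism is genuinely non-multiplicative and shifts wedge degree: in the paper's gluing lemma the unit $\varepsilon_{\overline F}\otimes\omega'$ on the glued side corresponds to $\varepsilon_F\otimes(e_1\wedge\omega')$ on the unglued side, i.e.\ basis monomials \emph{divisible by the gluing arcs} survive and the others die, which is also what makes the degree shift $\delta$ and parity shift $\pi$ change the way they do. A degree-preserving unital map cannot be the isomorphism asserted by the theorem.

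Two further points. First, even for a correct candidate map, ``both sides have the same rank, hence $\Psi$ is bijective'' is not a valid inference over $\Z$ (multiplication by $2$ on $\Z$ preserves rank); one must either carry a basis to a basis or prove surjectivity onto a free target, and freeness and rank of the balanced tensor product are precisely what has to be established. Your per-arc relation count is also off: when two arcs of $M_2$ both carry nontrivial actions but the incident homology classes interact (e.g.\ two disks glued along two intervals, producing an annulus), the balancing halves the rank only once, not once per active arc, so the count must be organized differently. Second, for comparison, the paper avoids constructing a global map on $\wedge^* H_1(N,S_+)$ altogether: it identifies $\Zb^{S_+}_{\delta,\pi}(F')\otimes_{\Z}\Zb^{S_+}_{\delta,\pi}(F)$ with $\Zb^{S_+}_{\delta,\pi}((F'\sqcup F)_{\mathrm{left}})$ by an explicit sign-twisted map, rewrites the $A(M_2)$-balancing as the quotient by $\sum\im(E_1+E_2)$ over components of $M_2$, and then applies a self-gluing lemma (proved case-by-case with explicit arc/circle bases, which simultaneously verifies the $\delta$, $\pi$, and sign bookkeeping) once per component of $M_2$, inducting on the number of components. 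If you want to salvage your outline, the essential missing ingredient is exactly such a one-interval gluing statement with an explicitly chosen basis on both sides; the Mayer--Vietoris kernel description alone does not produce the isomorphism.
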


We will use Theorem~\ref{thm:MainIntro} to deduce the following corollary.

\begin{corollary}\label{cor:OpenTQFT}
Let $\Cob^{\ext}_{\open}$ denote the full symmetric monoidal subcategory of $\Cob^{\ext}$ on objects consisting only of intervals and no circles. For any $(\delta,\pi) \in \Pi$, the assignments $M \mapsto A(M)$ and $F \mapsto \Zb^{S_+}_{\delta,\pi}(F)$ give a symmetric monoidal functor $\Zb^{S_+}_{\delta,\pi}$ from $\Cob^{\ext}_{\open}$ to the symmetric monoidal category $\SAlg_{\Z}$ of $\Z$-graded super rings and $\Q$-graded bimodules up to isomorphism.
\end{corollary}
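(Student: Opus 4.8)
The plan is to check, in turn, the three compatibilities that upgrade $\Zb^{S_+}_{\delta,\pi}$ to a symmetric monoidal functor, namely preservation of composition and identities, the monoidal structure constraint on disjoint unions, and compatibility with the symmetry; the coherence axioms will then require no separate argument. First, every object of $\Cob^{\ext}_{\open}$ consists only of intervals, so for any composable pair $M_3 \xleftarrow{F'} M_2 \xleftarrow{F} M_1$ the hypothesis of Theorem~\ref{thm:MainIntro} is automatically met, and the theorem yields $\Zb^{S_+}_{\delta,\pi}(F' \circ F) \cong \Zb^{S_+}_{\delta,\pi}(F') \otimes_{A(M_2)} \Zb^{S_+}_{\delta,\pi}(F)$ as $(A(M_3),A(M_1))$-bimodules. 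This is exactly the statement that $\Zb^{S_+}_{\delta,\pi}$ preserves composition, once one recalls that a morphism $A(M_1) \to A(M_2)$ of $\SAlg_{\Z}$ is an isomorphism class of $(A(M_2),A(M_1))$-bimodules and that composition in $\SAlg_{\Z}$ is the tensor product over the middle ring, which is strictly associative and unital on the level of isomorphism classes. For identities, I would compute directly: the identity cobordism of a disjoint union of $n$ intervals is a disjoint union of $n$ rectangles, each of which contributes a single $\Z$ to $H_1(-,S_+)$, so $\wedge^* H_1(\id_M,S_+)$ is free of rank $2^n$ with the regular $(A(M),A(M))$-bimodule action coming from the decategorified higher actions; one then checks that $\delta(\id_M)$ and $\pi(\id_M)$ are the values in the family $\Pi$ that make the resulting grading shift and parity shift trivial (concretely $\delta(\id_M) = -h$ with $h = n$ and $\pi(\id_M) = 0$, so that the exterior generators land in $\Z$-degree $-1$ as for $E$), whence $\Zb^{S_+}_{\delta,\pi}(\id_M) \cong A(M)$, the identity morphism of $A(M)$.

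For the monoidal structure, on objects $A(M \sqcup M') = A(M) \otimes_{\Z} A(M')$ holds by the definition of $A$ as a tensor product over interval components, and $A(\emptyset) = \Z$; these furnish the structure isomorphisms. On morphisms I would use $H_1(F \sqcup F', S_+ \sqcup S_+') = H_1(F,S_+) \oplus H_1(F',S_+')$ together with the standard graded-super isomorphism $\wedge^*(V \oplus W) \cong \wedge^* V \otimes \wedge^* W$, the additivity of $h$, and the additivity $\delta(F \sqcup F') = \delta(F) + \delta(F')$ and $\pi(F \sqcup F') \equiv \pi(F) + \pi(F') \pmod 2$ to be extracted from the definition of $\Pi$, producing an isomorphism $\Zb^{S_+}_{\delta,\pi}(F \sqcup F') \cong \Zb^{S_+}_{\delta,\pi}(F) \otimes_{\Z} \Zb^{S_+}_{\delta,\pi}(F')$. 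Here one must commute the prefactor $(\Z^{0|1})^{\otimes \pi(F')}$ past $\wedge^* H_1(F,S_+)$, picking up the appropriate Koszul sign, and check that the $A(M_2 \sqcup M_2') = A(M_2) \otimes A(M_2')$-action on the left-hand side agrees, with Koszul signs, with the componentwise action on the right-hand side, which follows because the decategorified higher actions are local to the interval components. Naturality of this isomorphism in $(F,F')$ is automatic: both composites of structure maps in $\SAlg_{\Z}$ are again isomorphism classes of bimodules, so the relevant square commutes as soon as the two bimodules are abstractly isomorphic.

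For the symmetry, the braiding $M \sqcup M' \to M' \sqcup M$ in $\Cob^{\ext}_{\open}$ is the disjoint union of rectangles with the components reordered, so by the identity computation above its image under $\Zb^{S_+}_{\delta,\pi}$ is the regular bimodule $A(M) \otimes_{\Z} A(M')$ regarded as an $(A(M') \otimes A(M),\, A(M) \otimes A(M'))$-bimodule via the relabelling; I would verify that this is precisely the Koszul-signed swap bimodule serving as the symmetry in $\SAlg_{\Z}$. Finally, since the morphisms of $\SAlg_{\Z}$ are isomorphism classes of bimodules, every pentagon, triangle, and hexagon of structure isomorphisms commutes automatically, so no further coherence checks are needed, and Theorem~\ref{thm:MainIntro} together with the observations above establishes the corollary. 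The main obstacle I anticipate is the sign bookkeeping: confirming that the Koszul signs appearing in the super tensor product for disjoint unions and in the super swap for the symmetry are exactly those produced by the explicit prefactors $(\Z^{0|1})^{\otimes \pi(F)}$ and by the decategorified bimodule actions — the composition statement itself being already in hand from Theorem~\ref{thm:MainIntro}.
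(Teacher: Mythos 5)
Your overall strategy is the same as the paper's: composition via Theorem~\ref{thm:MainIntro} (whose hypothesis is indeed automatic in $\Cob^{\ext}_{\open}$), identities and the symmetrizer via the rectangle/reordered-rectangle cobordisms, and the monoidal constraint via additivity of $\delta$, $\pi$, and $H_1(-,S_+)$ under disjoint union with a Koszul-sign check on the actions. Two points, however, are genuinely off. First, your concrete identity computation has the wrong parity: for $\id_M$ with $n$ rectangles one gets $\delta(\id_M)=-n$ as you say, but $\pi(\id_M)= n + 2nN_3 \equiv n \pmod 2$, not $0$. This is not cosmetic: the identification $\Zb^{S_+}_{\delta,\pi}(\id_M)\cong A(M)$ sends the wedge-degree-zero element $\varepsilon\otimes 1$ to the product of all the $E$'s, which is odd when $n$ is odd, so an even parity shift would make the map fail to preserve parity already for a single interval. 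The paper avoids recomputing by citing Example~\ref{ex:TensorShifts} and Proposition~\ref{prop:OpenPantsTensor} with $p=1$ (where no constraints on the parameters are needed) together with additivity; you can do the direct computation, but the parity value must be $n \bmod 2$.

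Second, your final claim that ``since the morphisms of $\SAlg_{\Z}$ are isomorphism classes of bimodules, every pentagon, triangle, and hexagon of structure isomorphisms commutes automatically'' is not a valid principle, and it is used both to dismiss the functor coherence conditions and to declare naturality of the monoidal constraint ``automatic.'' In a category whose morphisms are isomorphism classes of bimodules, a square commutes exactly when the two composite bimodules are isomorphic, and that is a statement to be proved, not a formality. In particular you never specify the coherence data: the paper takes $\mu_{M,M'}$ to be the class of the bimodule $X_\mu$ attached to the forget-the-parentheses homomorphism $A(M)\otimes A(M')\to A(M\sqcup M')$, proves naturality by exhibiting the bimodule isomorphism $\Zb^{S_+}_{\delta,\pi}(F\sqcup F')\cong \Zb^{S_+}_{\delta,\pi}(F)\otimes_{\Z}\Zb^{S_+}_{\delta,\pi}(F')$ with the actions checked generator-by-generator (this is the check you do acknowledge), and then verifies compatibility with associators, unitors, and symmetrizers by observing that all the structure morphisms involved are of the form $X_f$ for algebra homomorphisms $f$ and that the corresponding diagrams of homomorphisms commute, whence the diagrams of bimodule classes commute. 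These checks are routine but they are part of the content of the corollary; replacing them by the blanket ``automatic'' argument leaves a real gap, even though your treatment of the symmetrizer itself (identifying $\Zb^{S_+}_{\delta,\pi}(\sigma_{M_1,M_2})$ with the Koszul-signed swap bimodule) matches the paper.
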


We briefly describe the family $\Pi$ of degree and parity shift functions, postponing a more detailed explanation until Section~\ref{sec:DegreeParityConstraints}. For a sutured surface $F$ with $h := \rk H_1(F,S_+)$, and for any $(A_1, A_2, A_3, A_4) \in \Q^4$, define
\begin{equation}\label{eq:DeltaDef}
\begin{aligned}
\delta_{A_1,A_2,A_3,A_4}(F) := &-A_1 h + (A_1 - 1)\#\{\textrm{no-}S_+\textrm{ non-closed components of }F\} \\ 
&+ A_2\#\{\textrm{no-}S_-\textrm{ non-closed components of }F\} \\ 
&+ A_3\#\{\textrm{closed components of }F\} + ((A_1 - 1)/2) \#\{S_+\textrm{ intervals}\} \\
&+ A_4\#\{S_+\textrm{ circles}\} .
\end{aligned}
\end{equation}
For any $(N_1,N_2,N_3,N_4) \in \{0,1\}^4$, define
\begin{equation}\label{eq:PiDef}
\begin{aligned}
\pi_{N_1,N_2,N_3,N_4}(F) &= h  + N_1\#\{\textrm{no-}S_-\textrm{ non-closed components of }F\} \\
&+ N_2\#\{\textrm{closed components of }F\} + N_3\#\{S_+\textrm{ intervals}\} \\
&+ N_4\#\{S_+\textrm{ circles}\} 
\end{aligned}
\end{equation}
modulo 2. Note that both $\delta_{A_1,A_2,A_3,A_4}$ and $\pi_{N_1,N_2,N_3,N_4}$ are additive under disjoint unions of sutured surfaces. An arbitrary element $(\delta_{A_1,A_2,A_3,A_4},\pi_{N_1,N_2,N_3,N_4})$ of $\Pi$ is specified by the choice of $(A_1,A_2,A_3,A_4)$ and $(N_1,N_2,N_3,N_4)$, which may be unrelated to each other.

\begin{figure}
    \centering
    \includegraphics[scale=0.9]{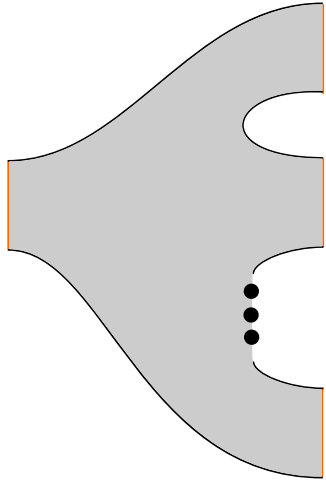}
    \caption{Open $p$-tuple of pants $\Pc$, with $p$ intervals on its right side and one on its left side.}
    \label{fig:OpenPants}
\end{figure}

\begin{example}\label{ex:TensorShifts}
    To the open $p$-tuple of pants $\Pc$ with $p \geq 0$ intervals in its input boundary (see Figure~\ref{fig:OpenPants}), we can ask which choices $(\delta_{A_1,A_2,A_3,A_4},\pi_{N_1,N_2,N_3,N_4}) \in \Pi$ result in $\Pc$ being assigned $(\Z[E]/(E^2))^{\otimes p}$ as a right module over itself, with left action of $\Z[E]/(E^2)$ given by iterating the coproduct 
    \[
    \Delta(E) = E \otimes 1 + 1 \otimes E
    \]
    (or the counit $\varepsilon(E) = 0$ when $p=0$). Note that if this identification holds, then taking
    \[
    \Zb^{S_+}_{\delta_{A_1,A_2,A_3,A_4},\pi_{N_1,N_2,N_3,N_4}}(\Pc) \otimes_{(\Z[E]/(E^2))^{\otimes p}} (X_1 \otimes_{\Z} \cdots \otimes_{\Z} X_p)
    \]
    for left modules $X_i$ over $A(M_1)$ yields the tensor product $X_1 \otimes_{\Z} \cdots \otimes_{\Z} X_p$ in the monoidal category of representations of the Hopf superalgebra $\Z[E]/(E^2)$, in line with the higher-tensor-product interpretation of the gluing theorem in \cite{ManionRouquier}.

    For $\Pc$ we have $h = p$; the one component of $\Pc$ is non-closed and intersects both $S_+$ and $S_-$ nontrivially. It has $p+1$ intervals in $S_+$ and no circles in $S_+$. We get
    \[
    \delta_{A_1,A_2,A_3,A_4}(\Pc) = -pA_1 + ((A_1 - 1)/2)(p+1) = -p(A_1 + 1)/2 + (A_1 - 1)/2
    \]
    and
    \[
    \pi_{N_1,N_2,N_3,N_4}(\Pc) = p + N_3(p+1).
    \]
    It will turn out (see Proposition~\ref{prop:OpenPantsTensor}) that we want the basis element of $\wedge^* H_1(\Pc,S_+)$ in the highest degree to correspond to $\pm 1 \in (\Z[E]/(E^2))^{\otimes p}$. Thus, we want $\delta_{A_1,A_2,A_3,A_4}(\Pc) = -p$ and (modulo 2) $\pi_{N_1,N_2,N_3,N_4}(\Pc) = p$. For any choice of parameters with
    \[
    A_1 = 1 \quad \textrm{and} \quad N_3 = 0,
    \]
    we get a grading shift that recovers tensor products in 
    $\Rep(\Z[E]/(E^2))$ from the open pair of pants $\Pc$ for all $p$. The choice $(A_1,A_2,A_3,A_4) = (1,0,0,0)$ and $(N_1,N_2,N_3,N_4) = (0,0,0,0)$ stands out as particularly simple; for general $F$ we have
    \[
    \delta_{1,0,0,0}(F) := - h \quad \textrm{and} \quad \pi_{0,0,0,0}(F) = h.
    \]
    If we are only concerned with the case $p=1$ (the identity cobordism on an interval) and not general $p$, then no conditions are required on $(A_1,A_2,A_3,A_4)$ and $(N_1,N_2,N_3,N_4)$ for this identity cobordism to get assigned $\Z[E]/(E^2)$ as a bimodule over itself.
\end{example}

\begin{remark}
As with the $\F_2$ case in \cite{ManionDHA}, the super abelian groups $\Zb^{S_+}_{\delta,\pi}(F)$ do not seem to admit an analogue of Theorem~\ref{thm:MainIntro} when gluing along circles rather than intervals. Relatedly, while $\Zb^{S_+}_{\delta,\pi}(F)$ has actions of $U^{\Z}(\gl(1|1)^+) = \Z[E]/(E^2)$ for interval components of $S_+$, it does not seem to have interesting actions of anything related to the full $\mathfrak{gl}(1|1)$ (with both odd generators $E$ and $F$ of $\gl(1|1)$ acting) given a circle component of $S_+$. In \cite{ManionOpenClosed}, we plan to address both limitations by introducing larger variants $\Zb^{P}_{\delta,\pi}(F)$ of the spaces $\Zb^{S_+}_{\delta,\pi}(F)$ such that $\Zb^{P}_{\delta,\pi}(F)$ admits interesting actions of $U(\mathfrak{psl}(1|1))$ for circle components of $S_+$ as well as $U(\psl(1|1)^+)$ for interval components of $S_+$. The spaces $\Zb^{P}_{\delta,\pi}(F)$ will satisfy analogues of Theorem~\ref{thm:MainIntro} when gluing along circles as well as intervals.
 \end{remark}

\bigskip

\noindent \textbf{Comparing with 3d non-semisimple TQFT.} Consider a connected sutured surface $F = F_{g,p}$ of genus $g$ with $p \geq 1$ boundary circles in $S_+$ and no other boundary (so $S_- = \varnothing$). We have $h = 2g - 1 + p$. Since $\partial F_{g,p} = S_+$, we have
    \[
    H_1(F,S_+) = H_1(F,\partial F) \cong H^1(F);
    \]
    in this section on the comparison with non-semisimple TQFT, but not elsewhere, we will work over $\C$ and by $H^1(F)$ we mean $H^1(F;\C)$.

    In \cite[Section 5.2.2]{Mikhaylov}, Mikhaylov looks at this sutured surface $F_{g,p}$ with the boundary circles viewed as punctures and labeled by line operators $L_{\mathbf{t_i},n_i}$. He assumes that the parameters $\mathbf{t_i} \in \C^{\times}$ are ``generic,'' i.e. not equal to $1$ (but that their product is equal to $1$), and that the parameters $n_i$ are integers. The parameters $\mathbf{t_i}$ (along with some choice of $2g$ additional holonomies in $\C^{\times}$ when $g > 0$) specify a nontrivial complex line bundle $\Lc$ with flat connection on $F_{g,p}$. Mikhaylov sets
    \[
    h_{\gen} = 2g - 2 + p = \dim H^1(F_{g,p};\Lc)
    \]
    (Mikhaylov calls this quantity $h$ but to avoid confusion we will reserve $h$ for the dimension of $H_1(F,S_+)$, which is $2g-1+p = h_{\gen}+1$ in this case). The reason the dimension $h_{\gen}$ for generic holonomies $\mathbf{t_i}$ is one less than our dimension $h$ is the relation shown graphically in \cite[Figure 2]{Mikhaylov}. Mikhaylov describes the 3d $\mathfrak{psl}(1|1)$ Chern--Simons TQFT as having state space
    \[
    (\det H^1(F_{g,p};\Lc))^{\otimes (-1/2 + N/h_{\gen})} \otimes \wedge^* H^1(F_{g,p};\Lc)
    \]
    on $F_{g,p}$ decorated by $\Lc$, where $N = \sum_{i=1}^p (-1/2 + n_i)$. See \cite[equation (5.6)]{Mikhaylov}. He gives the graded superdimension of this space as
    \begin{equation}\label{eq:MikhaylovGeneric}
    t^N (t^{1/2} - t^{-1/2})^{h_{\gen}}
    \end{equation}
    (see \cite[equation (5.8)]{Mikhaylov}) where powers of $t$ correspond to our $\Q$-grading on the state space, i.e. Mikhaylov's $U(1)_{\mathrm{fl}}$ charge. 
   
   Geer--Young note in \cite[Section 6.1]{GeerYoung} that, in their $\Dc^{q,\intt}$ TQFT, their graded dimension formula \cite[equation (19)]{GeerYoung} for the Hilbert spaces of generic surfaces agrees with Mikhaylov's formula \eqref{eq:MikhaylovGeneric}, at least up to the sign in front. More explicitly, when labeling the $p$ boundary components of $F_{g,p}$ by the representations $V(\alpha_i,n_i)_{\overline{0}}$ defined in \cite[Section 2.3.2]{GeerYoung}, and setting $N = \sum_i (n_i - 1/2)$ as in Mikhaylov's paper, Geer--Young's equation (19) gives the graded dimension of the state space as
    \begin{equation}\label{eq:GeerYoungGenericEvenLabels}
    (-1)^{g-1} t^{N} (t^{1/2} - t^{-1/2})^{h_{\gen}}
    \end{equation}
    where we write Geer--Young's $q^{\beta}$ as $t^{-1/2}$ (see \cite[Theorem 3.5]{GeerYoung}), and we take $t^{\mu} = 1$ because the product of the $\mathbf{t_i}$ is $1$.

    It is natural to ask how the graded super dimensions of our spaces compare with the above formulas, for various choices of degree shift and parity functions $(\delta,\pi) \in \Pi$. Our spaces are based on $H^1(F;\C)$ which arises when taking $\Lc$ to be the trivial local system, so we will take the limit $\mathbf{t}_i \to 1$ in Mikhaylov's analysis (and correspondingly $\alpha_i \to 0$ in Geer--Young) without worrying about possible subtleties involved with these limits as mentioned in \cite[Section 5.3.5]{Mikhaylov}. In other words, we will be concerned with the state spaces of non-generic or critical surfaces rather than generic surfaces. 
    
    If all of Mikhaylov's $\mathbf{t_i}$ are $1$, the relation in \cite[Figure 2]{Mikhaylov} gets trivialized. Where the graded dimension formulas in the generic case have $h_{\gen}$, we should have $h$ instead. We will also take all the labels $n_i$ to be zero, so that the punctures are each labeled by $V(0,0)_{\overline{0}}$; naively extrapolating \eqref{eq:GeerYoungGenericEvenLabels} leads to
    \begin{equation}\label{eq:GYGradedDimension1}
    (-1)^{g-1} t^{-p/2} (t^{1/2} - t^{-1/2})^{h}.
    \end{equation}
    Observations about circle gluing from a follow-up paper \cite{ManionOpenClosed} in preparation suggest that because we are in the non-generic case we should actually multiply \eqref{eq:GYGradedDimension1} by $-t^{1/2}$ and try to recover the graded dimension formula
    \begin{equation}\label{eq:ModifiedGradedDimension}
    (-1)^g t^{-p/2 + 1/2} (t^{1/2} - t^{-1/2})^{h}.
    \end{equation}
    For now, note that in \eqref{eq:ModifiedGradedDimension} as well as the generic formulas \eqref{eq:MikhaylovGeneric} and \eqref{eq:GeerYoungGenericEvenLabels}, all powers of $t$ are integral, but this is not true in \eqref{eq:GYGradedDimension1}. We want to characterize which choices of our parameters recover the graded dimension formula \eqref{eq:ModifiedGradedDimension}.

    \begin{itemize}
        \item To recover \eqref{eq:ModifiedGradedDimension}, we want to choose parameters so that our degree shift $\delta_{A_1,A_2,A_3,A_4}(F_{g,p})$ equals
        \[
        -p/2 + 1/2 - (1/2)(2g - 1 + p) = -h/2 - p/2 + 1/2.
        \]
        Thus, we should set
        \[
        A_1 = 1/2, \quad A_2 = 1/2, \quad A_4 = -1/2
        \]
        with $A_3$ left free to vary. Note that the condition $A_1 = 1/2$ is inconsistent with the condition $A_1 = 1$ from Example~\ref{ex:TensorShifts}. This is one reason we chose to look at a parametrized family of grading shifts in this paper rather than singling out one shift as the canonical choice, since we view both tensor products from open $p$-tuples of pants and connections to 3d non-semisimple TQFT as important in this theory.
        
        \medskip

        \item To recover \eqref{eq:ModifiedGradedDimension}, we also want our parity $\pi(F_{g,p})$ to be
        \[
        g  + (2g - 1 + p) = g + p = h/2 + p/2 - 1/2.
        \]
        Since the formulas for our parity functions $\pi_{N_1,N_2,N_3,N_4}$ all have a coefficient of $1$ rather than $1/2$ in front of $h$, there is no choice of $N_1$ that works. However, from the perspective of \cite{ManionOpenClosed} it will be natural to take
        \[
        (A_1, A_2, A_3, A_4) = (1/2, \, 1/2, \, 0, \, -1/2)
        \]
        in \eqref{eq:DeltaDef}, defining (for general open-closed cobordisms $F$) a shift
        \begin{align*}
        \delta_{1/2}(F) =& -h/2 - (1/2)\#\{\textrm{no-}S_+\textrm{ non-closed components of }F\} \\ 
        &+ (1/2)\#\{\textrm{no-}S_-\textrm{ non-closed components of }F\} \\
        & - (1/4) \#\{S_+\textrm{ intervals}\} - (1/2)\#\{S_+\textrm{ circles}\}.
        \end{align*}
        When the number of $S_+$ intervals of $F$ is equal modulo 4 to twice the number of boundary components of $F$ intersecting $S_-$ nontrivially, one can show that $\delta_{1/2}(F)$ is an integer, so we can let
        \[
        \pi_{1/2}(F) = \delta_{1/2}(F) \quad \textrm{modulo } 2.
        \]
        For $F = F_{g,p}$ we have no $S_+$ intervals and no boundary components of $F$ intersecting $S_-$ nontrivially, so $\pi_{1/2}(F_{g,p})$ makes sense and we have
        \[
        \pi_{1/2}(F_{g,p}) = -h/2 -p/2 + 1/2.
        \]
        This quantity equals $h/2 + p/2 - 1/2$ modulo $2$, so we see that the degree shift $\delta_{1/2}$ and parity $\pi_{1/2}$ combine to recover the graded dimension formula \eqref{eq:ModifiedGradedDimension}.
    \end{itemize}

\begin{conjecture}\label{conj:GeerYoungV}
When applying the Geer--Young TQFT $\Zb^{GY}$ to the surface $F_{g,p}$ for $p \geq 1$, assume we are viewing all boundary components of $F_{g,p}$ as punctures labeled by $V(0,0)_{\overline{0}}$. View the state space $\Zb^{GY}(F_{g,p})$ as $\Z$-graded by the second $\Z$ factor of Geer--Young's grading by $Z = \Z \times \Z \times \Z/2\Z$. For the degree-shift and parity functions $\delta_{1/2}$ and $\pi_{1/2}$ defined above, we have
\[
\Zb^{S_+}_{\delta_{1/2},\pi_{1/2}}(F_{g,p}) \cong \Zb^{GY}(F_{g,p})
\]
as $\Z$-graded super vector spaces.
\end{conjecture}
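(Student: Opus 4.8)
\textbf{Proof proposal for Conjecture~\ref{conj:GeerYoungV}.}

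The plan is to reduce the asserted isomorphism to a bigraded dimension count. Two $\Z$-graded super vector spaces are isomorphic precisely when, for each $\Z$-degree $j$ and each parity $\epsilon \in \Z/2\Z$, the dimensions of the $(j,\epsilon)$-pieces agree, so it suffices to compute these for both sides. On our side this is immediate from the definitions: using $H_1(F_{g,p},S_+) \cong H^1(F_{g,p}) \cong \C^h$ with $h = 2g-1+p$, the space $\Zb^{S_+}_{\delta_{1/2},\pi_{1/2}}(F_{g,p})$ is $\wedge^*\C^h$ with the summand $\wedge^k \C^h$ placed in $\Z$-degree $k + \delta_{1/2}(F_{g,p}) = k + 1 - g - p$ and parity $k + \pi_{1/2}(F_{g,p}) \equiv k + 1 + g + p \pmod 2$. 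In particular each nonzero $\Z$-degree carries exactly one parity, i.e.\ our space is \emph{thin}, so it is determined by the property of being thin together with its graded superdimension, which the computation preceding the conjecture shows equals \eqref{eq:ModifiedGradedDimension}, namely $(-1)^g t^{-p/2 + 1/2}(t^{1/2} - t^{-1/2})^h$. Thus the conjecture is equivalent to the statement that $\Zb^{GY}(F_{g,p})$, with the $\Z$-grading given by the second factor of $Z = \Z \times \Z \times \Z/2\Z$ and its super grading, is thin with that same graded superdimension; equivalently, that its full bigraded dimension is the one just described for our space.

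The substantive work is therefore to compute $\Zb^{GY}(F_{g,p})$ for all $g \geq 0$, $p \geq 1$ with every puncture labeled by the non-generic object $V(0,0)_{\overline 0}$. I would carry this out in two stages. First, use the gluing/factorization axioms of the Geer--Young TQFT to express $\Zb^{GY}(F_{g,p})$ through building blocks: decompose $F_{g,p}$ into disks, annuli, and pairs of pants, or, more efficiently, present the state space (as in the Lyubashenko-type description of such TQFTs) as the appropriate Hom space built from $V(0,0)_{\overline 0}$, its dual, and the handle object of the Geer--Young category. This reduces the whole computation to a finite amount of data: certain tensor products and Hom spaces among these objects, together with the $\Z \times \Z/2\Z$-gradings they carry. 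Second, carry out that finite computation. Since $V(0,0)_{\overline 0}$ is the non-generic limit $\alpha \to 0$ of the generic objects $V(\alpha,0)$, the key point is to pin down the jump relative to the naive generic answer \eqref{eq:GYGradedDimension1}: one tracks how the generic morphism spaces degenerate at $\alpha = 0$, identifies the extra morphisms that appear (the $\mathfrak{psl}(1|1)$ analogue of the ``$h_{\gen} \rightsquigarrow h$'' and ``$-t^{1/2}$'' corrections discussed around \eqref{eq:ModifiedGradedDimension}), and reads off the corrected bigraded dimension. Checking the base cases $F_{0,1}, F_{0,2}, F_{0,3}, F_{1,1}$ explicitly and then propagating through the gluing axioms would yield the general formula by induction on $g$ and $p$, and in particular would establish thinness of the Geer--Young state space.

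I expect the main obstacle to be precisely this non-generic limit in the second stage. Geer--Young's explicit dimension formula \cite[equation (19)]{GeerYoung} is stated for generic surfaces, and the limit $\alpha_i \to 0$ is known to be delicate (cf.\ the subtleties noted in \cite[Section 5.3.5]{Mikhaylov}), so one cannot simply substitute; one must instead work inside the Geer--Young category at the non-generic label, understanding the structure of $V(0,0)_{\overline 0}$ (e.g.\ its projective cover), the modified trace, and the state-space construction well enough to compute bigraded dimensions honestly. In particular the $-t^{1/2}$ correction factor that the follow-up paper \cite{ManionOpenClosed} predicts on grounds of circle-gluing compatibility must be \emph{derived} here, not assumed. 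A secondary point requiring care is matching conventions: one must verify that the $\Z$-grading singled out and the super grading are the ones corresponding to our $\Q$-grading and parity under the dictionary $q^\beta = t^{-1/2}$, and that discarding the first $\Z$ factor of $Z$ loses no information relevant to a $\Z$-graded super vector space isomorphism. Once the bigraded dimension of $\Zb^{GY}(F_{g,p})$ is in hand and seen to be thin with graded superdimension \eqref{eq:ModifiedGradedDimension}, the isomorphism with $\Zb^{S_+}_{\delta_{1/2},\pi_{1/2}}(F_{g,p})$ follows formally.
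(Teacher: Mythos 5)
The statement you are addressing is a \emph{conjecture}: the paper gives no proof of it, and offers only the remark (immediately after the statement) that, \emph{assuming} $\Zb^{GY}(F_{g,p})$ is purely even or purely odd in each $\Z$-degree, the conjecture is equivalent to agreement of graded superdimensions, the left-hand side's being \eqref{eq:ModifiedGradedDimension} by the choice of $\delta_{1/2}$ and $\pi_{1/2}$. Your first paragraph reproduces exactly this reduction (including the correct computation that $\wedge^k$ sits in $\Z$-degree $k+1-g-p$ with parity $k+1+g+p$, so the left-hand side is thin), and you rightly upgrade the paper's parity assumption on the Geer--Young side into something that must itself be established. Up to that point you are aligned with the paper.

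The genuine gap is that the substantive content of the conjecture --- computing $\Zb^{GY}(F_{g,p})$ with every puncture labeled by the non-generic object $V(0,0)_{\overline{0}}$, establishing its thinness, and showing its graded superdimension is \eqref{eq:ModifiedGradedDimension} rather than the naive extrapolation \eqref{eq:GYGradedDimension1} --- is only described as a program, not carried out, and you yourself identify it as the main obstacle. Geer--Young's dimension formula \cite[equation (19)]{GeerYoung} applies only to generic decorations, so nothing can be substituted; the $-t^{1/2}$ correction and the replacement of $h_{\gen}$ by $h$ are precisely what the conjecture asserts and are motivated in the paper only by circle-gluing heuristics from the unpublished \cite{ManionOpenClosed}, so they cannot serve as a known target for your induction on $g$ and $p$. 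Until the non-generic state spaces (or at least the base cases $F_{0,1},F_{0,2},F_{0,3},F_{1,1}$ together with a factorization argument valid at non-generic labels, which itself requires care since the Geer--Young gluing is formulated with genericity hypotheses) are honestly computed inside their category, no part of the conjecture is established. In short: your reduction is correct and matches the paper's own framing, but the proposal proves nothing beyond what the paper already observes; the conjecture remains open both in the paper and in your write-up.
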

Assume that, like $\Zb^{S_+}_{\delta_{1/2},\pi_{1/2}}(F_{g,p})$, the $\Z$-graded super vector space $\Zb^{GY}(F_{g,p})$ is purely even or purely odd in each $\Z$-degree. Conjecture~\ref{conj:GeerYoungV} then amounts to saying that the graded dimensions of the two sides agree, and since by the choice of $\delta_{1/2}$ and $\pi_{1/2}$ the graded dimension of $\Zb^{S_+}_{\delta_{1/2},\pi_{1/2}}(F_{g,p})$ is given by \eqref{eq:ModifiedGradedDimension}, the conjecture says that \eqref{eq:ModifiedGradedDimension} is the correct extrapolation of \eqref{eq:GeerYoungGenericEvenLabels} from generic decorations to our non-generic decorations.

\begin{remark}
The connection between decategorified bordered Floer homology and exterior powers of $H_1$ groups first appeared in work of Petkova \cite{PetkovaDecat}. When $F$ is connected with one boundary component and $|\Lambda| = 2$, so that one of $\{C_1,C_2\}$ is an interval and the other is empty, Petkova defined a decategorification over $\Z$ of the bordered Heegaard Floer invariant of $F$ and identified it with $\wedge^* H_1(F;\Z)$, which given the assumptions is isomorphic to $\wedge^* H_1(F,S_+;\Z)$. See also Hom--Lidman--Watson \cite{HLW} for further connections to bordered Heegaard Floer theory at the 3-dimensional level.

Putting aside the connection to Heegaard Floer theory, vector spaces $\wedge^* H_1(F)$ also appear as state spaces of surfaces in earlier TQFT constructions of Frohman--Nicas \cite{FrohmanNicas} and Donaldson \cite{DonaldsonTQFT} related to the Alexander polynomial (see also Kerler \cite{KerlerHomologyTQFT}), and close relatives of these spaces appear in related contexts in many other places in the literature. As far as the author is aware, \cite{ManionDHA} and this paper are the first investigations of TQFT structure in dimensions 1 and 2 specifically for these wedge-of-first-homology state spaces (most TQFT references involving $\wedge^* H_1$ stick to closed surfaces and look at structure in dimensions 2 and 3).
\end{remark}

\medskip
\noindent \textbf{Organization.} In Section~\ref{sec:Surfaces} we discuss sutured surfaces and their relationship to open-closed cobordisms, and in Section~\ref{sec:Actions} we define actions of $\Z[E]/(E^2)$ on $\Zb^{S_+}_{\delta,\pi}(F)$ for sutured surfaces $F$. In Section~\ref{sec:Tensor} we look at the open $p$-tuple of pants example in more detail and examine its relationship with Hopf-algebraic tensor products. In Section~\ref{sec:MainLemma} we prove our main gluing lemma; in Section~\ref{sec:MainProof} we use this lemma to deduce Theorem~\ref{thm:MainIntro} and in Section~\ref{sec:CorollaryProof} we prove Corollary~\ref{cor:OpenTQFT}. In Section~\ref{sec:DegreeParityConstraints} we discuss the rationale behind the parametrized families \eqref{eq:DeltaDef} and \eqref{eq:PiDef}.

\medskip
\noindent \textbf{Acknowledgments.} The author would like to thank Sergei Gukov and Matthew Young for helpful conversations. The author was partially supported by NSF grant number DMS-2151786.

\section{Preliminaries}\label{sec:Preliminaries}

\subsection{Sutured surfaces and open-closed cobordisms}\label{sec:Surfaces}

We start by recalling Zarev's definition of sutured surface \cite[Definition 1.2]{Zarev}, generalized to remove some of the topological restrictions that Zarev imposes, and with some extra data that will be useful for signs in this paper.

\begin{definition}
A sutured surface is a quintuple $(F,\Lambda,S_+,S_-,\ell)$ where:
\begin{itemize}
\item $F$ is a compact oriented surface, possibly with boundary;
\item $\Lambda$ is a choice of some even number $\geq 0$ of points in each boundary component of $F$;
\item The components of $\partial F \setminus \Lambda$ are labeled as being in $S_+$ or $S_-$, in alternating fashion across the points of $\Lambda$.
\item Each component of $S_+$ is labeled as ``incoming'' or ``outgoing,'' and the sets of incoming and outgoing components of $S_+$ are each given an ordering (we let $\ell$ denote this labeling and ordering data).
\end{itemize}
\end{definition}
Unlike Zarev, we allow closed surfaces as well as circle boundary components of $F$ that are contained entirely in $S_+$ or $S_-$, and we require the choice $\ell$ of incoming/outgoing data and ordering data.

\begin{figure}
    \centering
    \includegraphics{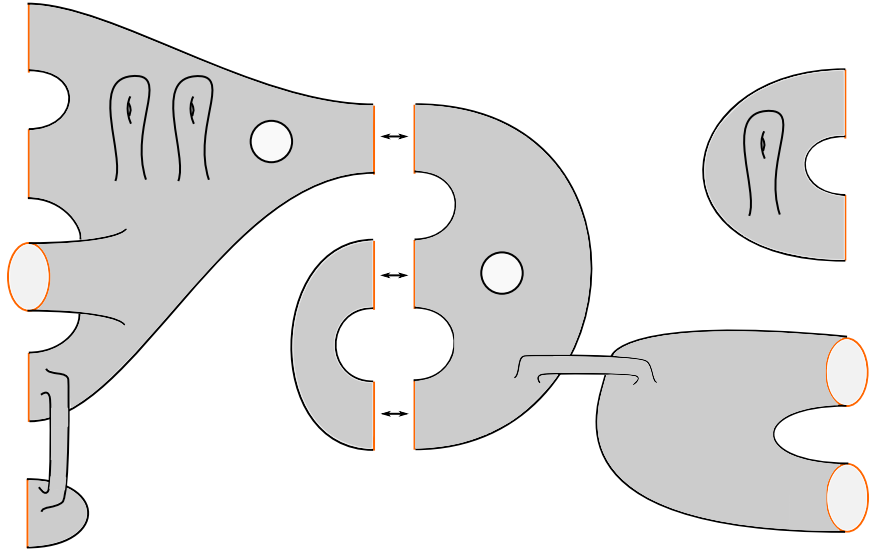}
    \caption{Two sutured surfaces (right and left) being viewed as morphisms in $\Cob^{\ext}$; to compose the morphisms one would glue as indicated by the arrows. In the sutured surface language, $S_+$ is drawn in orange and $S_-$ is drawn in black, while $\Lambda$ is the set of interface points between the orange and black boundary intervals. Viewing the surfaces as morphisms in $\Cob^{\ext}$, the source 1-manifold is drawn on the right (ordered from top to bottom) and the target 1-manifold is drawn on the left for each of the two surfaces. To simplify the figure we do not draw orientations.}
    \label{fig:CompositionExample}
\end{figure}

\begin{remark}
With the above definition, a sutured surface is the same data as a morphism in the open-closed oriented cobordism category $\Cob^{\ext}$ defined in \cite{LaudaPfeiffer}. We view $F$ as a morphism from $M_1$ to $M_2$ where $M_1$ is given by the orientation reversal of the incoming part of $S_+$ (a disjoint union, in order, of oriented intervals and circles) and $M_2$ is given by the outgoing part of $S_+$. We take orientations on all parts of the boundary of $F$ to be induced from the orientation on $F$ in the usual way without regard to sutured structure. With the boundary orientation we have $S_+ = M_2 \sqcup (-M_1)$ where the negative sign denotes orientation reversal.
\end{remark}

\begin{remark}
The reason we need ordering data is to specify unambiguously, for $F \colon M_1 \to M_2$ a morphism in $\Cob^{\ext}$, the left action of 
\[
A(M_1) := \Z[E]/(E^2) \otimes \cdots \otimes \Z[E]/(E^2)
\]
and the right action of $A(M_2)$ on the super abelian group we will associate to $F$. We need to know which factor of $\Z[E]/(E^2)$ corresponds to which interval.
\end{remark}

Figure~\ref{fig:CompositionExample} shows two sutured surfaces being presented as composable morphisms in the category $\Cob^{\ext}$. Our main gluing theorem, Theorem~\ref{thm:MainIntro}, will apply to the type of composition shown in Figure~\ref{fig:CompositionExample}, where the 1-manifold along which we are gluing consists only of intervals and no circles.

\subsection{Actions on exterior powers}\label{sec:Actions}

Let $(F,\Lambda,S_+,S_-,\ell)$ be a sutured surface. Choose 
\[
(A_1,A_2,A_3,A_4) \in \Q^4
\]
and let $\delta := \delta_{A_1,A_2,A_3,A_4}$ as defined in \eqref{eq:DeltaDef}. Similarly, choose 
\[
(N_1,N_2,N_3,N_4) \in \{0,1\}^4
\]
and let $\pi := \pi_{N_1,N_2,N_3,N_4}$ as defined in \eqref{eq:PiDef}. Consider the $\Z$-graded super abelian group
\[
\Zb^{S_+}_{\delta,\pi}(F) := \left( \Z^{0|1} \right)^{\otimes \pi(F)} \otimes \wedge^* H_1(F,S_+) \{\delta(F)\}
\]
where $\Z^{0|1}$ denotes $\Z$ viewed as a purely odd super abelian group. We write $\varepsilon_F$ for the standard basis element of $\left( \Z^{0|1} \right)^{\otimes \pi(F)}$; the parity of $\varepsilon_F$ is the same as the parity of $\pi(F)$. 

On $\Zb^{S_+}_{\delta,\pi}(F)$ we will define an action of the superalgebra $\Z[E]/(E^2)$ (with $E$ odd and of degree $-1$) for each interval component $I$ of $S_+$ by adding signs to the definition over $\F_2$ in \cite{ManionDHA}. It suffices to define, given $I$, a ($\Z$-linear) odd endomorphism $E$ of $\Zb^{S_+}_{\delta,\pi}(F)$ that has degree $-1$ and squares to zero. 

\begin{definition}
Let $(F,\Lambda,S_+,S_-,\ell)$ be a sutured surface and let $I$ be an outgoing interval component of $S_+$. We define
\[
E = E_I \colon \Zb^{S_+}_{\delta,\pi}(F) \to \Zb^{S_+}_{\delta,\pi}(F)
\]
as follows.  
\begin{itemize}
\item Define $\phi_I \colon H_1(F,S_+) \to \Z$ to be the composition
\[
\phi_I := H_1(F,S_+) \xrightarrow{\partial} H_0(S_+) \xrightarrow{[I]^* \cdot -} \Z
\]
where $[I]^*$ is the class of $H^0(S_+)$ dual to $[I] \in H_0(S_+)$.

\item For $k \geq 1$, define $\Phi_I \colon T^k H_1(F,S_+) \to T^{k-1} H_1(F,S_+)$ by
\begin{equation}\label{eq:PhiIDef}
\begin{aligned}
\Phi_I &:= \sum_{i=1}^k \bigg( T^{i-1}H_1(F,S_+) \otimes_{\Z} H_1(F,S_+) \otimes_{\Z} T^{k-i} H_1(F,S_+) \\
& \xrightarrow{(-1)^{i-1} \id_{T^{i-1}H_1(F,S_+)} \otimes \phi_I \otimes \id_{T^{i-1}H_1(F,S_+)}} T^{i-1}H_1(F,S_+) \otimes_{\Z} \Z \otimes_{\Z} T^{k-i} H_1(F,S_+). \bigg)
\end{aligned}
\end{equation}

\item Because of the sign $(-1)^{i-1}$ in \eqref{eq:PhiIDef}, we get an induced map
\[
\overline{\Phi_I} \colon \wedge^k H_1(F,S_+) \to \wedge^{k-1} H_1(F,S_+). 
\]
Define
\[
\overline{\Phi_I}' \colon \Zb^{S_+}_{\delta,\pi}(F) \to \Zb^{S_+}_{\delta,\pi}(F)
\]
by
\[
\overline{\Phi_I}'(\varepsilon_F \otimes \omega) := (-1)^{\pi(F)} \varepsilon_F \otimes \overline{\Phi_I}(\omega).
\]

\item Let $E$ be the sum of the maps $\overline{\Phi_I}'$ over all $k \geq 1$. $E$ is an odd map of degree $-1$ and the sign $(-1)^{i-1}$ in the definition of $\Phi_I$ ensures that $E^2 = 0$.
\end{itemize}
Define $E$ similarly when $I$ is an incoming interval component of $S_+$, except that:
\begin{itemize}
\item Instead of $(-1)^{i-1}$ we have $(-1)^{k-i}$ as the sign in \eqref{eq:PhiIDef};
\item We define
\[
\overline{\Phi_I}' \colon \Zb^{S_+}_{\delta,\pi}(F) \to \Zb^{S_+}_{\delta,\pi}(F)
\]
by
\[
\overline{\Phi_I}'(\varepsilon_F \otimes \omega) := \varepsilon_F \otimes \overline{\Phi_I}(\omega),
\]
without a sign of $(-1)^{\pi(F)}$.
\end{itemize}
\end{definition}

\begin{remark}
Because of how we chose the signs above, the actions of $\Z[E]/(E^2)$ on $\Zb^{S_+}_{\delta,\pi}(F)$ for outgoing interval components of $S_+$ are left actions and the actions for incoming interval components of $S_+$ are right actions. Correspondingly, we will think of $F$ as a cobordism ``from right to left'' in the open-closed cobordism category.
\end{remark}

\begin{example}\label{ex:ComputingE}
For simplicity, choose $(N_1,N_2,N_3,N_4)$ such that for the sutured surface $F$ shown in Figure~\ref{fig:EHomologyActionOriented} (adapted from \cite[Figure 12]{ManionDHA}), the parity shift $\pi(F)$ is even and correspondingly $\varepsilon_F$ is even. If we let $\alpha_i$ for $1 \leq i \leq 4$ denote the arcs and circles shown in Figure~\ref{fig:EHomologyActionOriented}, then we have
\begin{align*}
E_I(\varepsilon_F \otimes (\alpha_1 \wedge \alpha_2 \wedge \alpha_3 \wedge \alpha_4)) &= \varepsilon_F \otimes (-\alpha_1 \wedge \alpha_2 \wedge \alpha_4 + \alpha_1 \wedge \alpha_3 \wedge \alpha_4 + \alpha_1 \wedge \alpha_2 \wedge \alpha_4) \\
&= \varepsilon_F \otimes (\alpha_1 \wedge \alpha_3 \wedge \alpha_4).
\end{align*}
We could have done the same computation more directly by writing $\alpha_3$ as a circle rather than an arc with both endpoints on $I$ (in $H_1(F,S_+)$ there is no difference between these), so that the ``remove $\alpha_3$'' and ``remove $-\alpha_3$'' terms of $E_I$ do not appear at all. We drew $\alpha_3$ as shown in Figure~\ref{fig:EHomologyActionOriented} because such arcs often appear in bordered Heegaard Floer homology.
\end{example}

\begin{figure}
    \centering
    \includegraphics{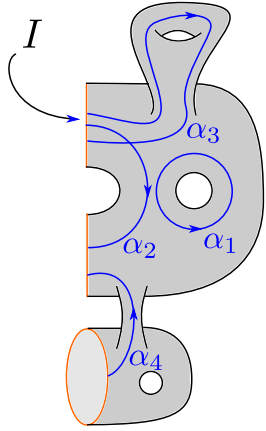}
    \caption{Examples of oriented arcs and circles in a sutured surface; from this data one can form the element $\varepsilon_F \otimes (\alpha_1 \wedge \alpha_2 \wedge \alpha_3 \wedge \alpha_4)$ of $\Zb^{S_+}_{\delta,\pi}(F)$, and Example~\ref{ex:ComputingE} computes $E_I$ applied to this element. Visually one can think of this application as summing, over all arc endpoints incident with $I$, of the removal (with appropriate signs, namely $+$ for an inward-pointing arc and $-$ for an outward-pointing arc after doing commutations that move $E_I$ inward from the left) of the corresponding arc from the wedge product.}
    \label{fig:EHomologyActionOriented}
\end{figure}

The same sign phenomenon responsible for $E^2 = 0$ ensures that if $I$ and $I'$ are two distinct incoming intervals or two distinct outgoing intervals of $S_+$, then $E_I$ and $E_{I'}$ anticommute. If $I$ is incoming and $I'$ is outgoing or vice-versa, then $E_I$ and $E_{I'}$ commute. 

If we let $m_{\inn}$ denote the number of incoming interval components and $m_{\out}$ denote the number of outgoing interval components of $S_+$, we have $A(M_1) = (\Z[E]/(E^2))^{\otimes m_{\inn}}$ and $A(M_2) = (\Z[E]/(E^2))^{\otimes m_{\out}}$. We thus get the structure of a (left, right) bimodule over the superalgebras $\left(A(M_1), A(M_2)\right)$ on $\Zb^{S_+}_{\delta,\pi}(F)$, where the superalgebra structure on $(\Z[E]/(E^2))^{\otimes m}$ is the tensor product of superalgebras (so that odd elements in different factors anticommute). 

\subsection{Tensor products}\label{sec:Tensor}

Here we discuss Example~\ref{ex:TensorShifts}, about the open $p$-tuple of pants $\Pc$, in more detail. Label the arcs in Figure~\ref{fig:OpenPantsBasis} from top to bottom as $e_1,\ldots,e_p$. Orient them as shown starting with $e_p$ which points from the outgoing boundary to the incoming boundary; $e_{p-1}$ should point from the incoming boundary to the outgoing boundary and so forth, in alternating fashion so that $e_1$ points from outgoing to incoming if $p$ is odd and from incoming to outgoing if $p$ is even. See Figure~\ref{fig:OpenPantsBasis}. As in Example~\ref{ex:TensorShifts}, make any choice of $(A_1,A_2,A_3,A_4)$ and $(N_1,N_2,N_3,N_4)$ with $A_1 = 1$ and $N_3 = 0$. Let $\delta = \delta_{A_1,A_2,A_3,A_4}$ and $\pi = \pi_{N_1,N_2,N_3,N_4}$.

\begin{figure}
    \centering
    \includegraphics[scale=0.8]{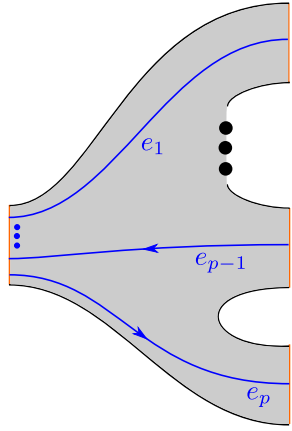}
    \caption{Orientations on arcs $e_1,\ldots,e_p$ giving a basis for $H_1(\Pc,S_+)$.}
    \label{fig:OpenPantsBasis}
\end{figure}

\begin{proposition}\label{prop:OpenPantsTensor}
Say we are given integers
\[
1 \leq i_1 < \cdots < i_k \leq p,
\]
and we label the elements of $\{1,\ldots,p\} \setminus \{i_1,\ldots,i_k\}$ as
\[
1 \leq j_1 < \cdots < j_{p-k} \leq p.
\]
For $1 \leq q \leq p-k$ let
\[
E_{j_q} := 1 \otimes \cdots 1 \otimes E \otimes 1 \otimes \cdots \otimes 1,
\]
an element of $(\Z[E]/(E^2))^{\otimes p}$; we can take products of such elements in $(\Z[E]/(E^2))^{\otimes p}$. The identification
\begin{equation}\label{eq:OpenPantsIso}
\varepsilon_{\Pc} \otimes (e_{i_1} \wedge \cdots \wedge e_{i_k}) \quad \leftrightarrow \quad E_{j_1} \cdots E_{j_{p-k}}
\end{equation}
gives an isomorphism 
\[
\Zc^{S_+}_{\delta,\pi}(\Pc) \cong (\Z[E]/(E^2))^{\otimes p}
\]
of $\Q$-graded (or $\Z$-graded) bimodules over $(\Z[E]/(E^2), (\Z[E]/(E^2))^{\otimes p})$, where the right action of $(\Z[E]/(E^2))^{\otimes p}$ on $(\Z[E]/(E^2))^{\otimes p}$ is by multiplication and the left action of $\Z[E]/(E^2)$ on $(\Z[E]/(E^2))^{\otimes p}$ is induced by the coproduct 
\[
\Delta(E) = E \otimes 1 + 1 \otimes E.
\]
\end{proposition}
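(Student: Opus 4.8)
The plan is to check the identification \eqref{eq:OpenPantsIso} on bases and then verify separately that it intertwines the right action of $(\Z[E]/(E^2))^{\otimes p}$ and the left action of $\Z[E]/(E^2)$. Both sides are free abelian groups with bases indexed by subsets $T \subseteq \{1,\ldots,p\}$: on the left, $\varepsilon_{\Pc} \otimes \bigl( \bigwedge_{i \in T} e_i \bigr)$, and on the right, the product $E_{T^c}$ of the $E$'s over the complement $T^c$, taken in increasing order of indices. So \eqref{eq:OpenPantsIso} is visibly a bijection on bases, and it remains to see that it preserves degree and parity and is a map of bimodules. By Example~\ref{ex:TensorShifts} with $A_1 = 1$ and $N_3 = 0$ we have $\delta(\Pc) = -p$ and $\pi(\Pc) \equiv p \pmod 2$; since $\delta(\Pc)$ is an integer, the $\Q$-grading here is a $\Z$-grading, and both the $\Q$-graded and $\Z$-graded statements follow from one check. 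With the convention that $\bigwedge^k H_1$ sits in degree $k$ with parity $k$ (under which $E_I$ is odd of degree $-1$), the element $\varepsilon_{\Pc} \otimes \bigl( \bigwedge_{i \in T} e_i \bigr)$ with $|T| = k$ has degree $k - p$ and parity $p + k$, matching $E_{T^c}$, which has degree $-(p-k)$ and parity $p-k$.

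Next I would compute the two homomorphisms $\phi$ that govern the actions. With the alternating orientations of Figure~\ref{fig:OpenPantsBasis}, each arc $e_q$ joins the single outgoing interval $I$ to the $q$-th incoming interval $I_q$, so $\partial e_q = \pm([I_q] - [I])$; the alternating pattern gives $\phi_{I_q}(e_r) = (-1)^{p-q}\,\delta_{q,r}$ and, since $\phi_I(e_q) = -\phi_{I_q}(e_q)$, also $\phi_I(e_r) = (-1)^{p-r+1}$ for all $r$. The structural point is that $\phi_{I_q}$ is supported on $e_q$ alone, while $\phi_I$ is nonzero on every basis arc. Consequently $E_{I_q}$ annihilates $\varepsilon_{\Pc} \otimes \bigl(\bigwedge_{i\in T} e_i\bigr)$ unless $q \in T$, and when $q = i_m \in T$ it deletes $e_q$ with the sign $(-1)^{k-m}(-1)^{p-q}$ (the incoming convention $(-1)^{k-i}$ in \eqref{eq:PhiIDef} together with $\phi_{I_q}(e_q)$; note there is no $(-1)^{\pi(\Pc)}$ prefactor for incoming intervals). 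On the other side, right multiplication of $E_{T^c}$ by $E_q$ is $0$ if $q \in T^c$ and otherwise inserts $E_q$ with the reordering sign $(-1)^{\#\{j \in T^c : j > q\}}$. A one-line count gives $\#\{j \in T^c : j > q\} = (p-q) - (k-m)$ when $q = i_m$, so the two signs agree modulo $2$. Since the $E_{I_q}$ pairwise anticommute and square to zero just as the $E_q$ do in $(\Z[E]/(E^2))^{\otimes p}$, agreement on the algebra generators $E_q$ suffices, so \eqref{eq:OpenPantsIso} intertwines the right actions.

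For the left action, $E_I$ sends $\varepsilon_{\Pc} \otimes \bigl( \bigwedge_{i\in T} e_i \bigr)$ to $(-1)^{\pi(\Pc)}$ times the signed sum over $m = 1, \ldots, k$ of the wedge with $e_{i_m}$ deleted, the $m$-th term carrying sign $(-1)^{m-1}\phi_I(e_{i_m})$ (the outgoing convention $(-1)^{i-1}$ and the prefactor $(-1)^{\pi(F)}$ from the definition of $E_I$). Meanwhile $E$ acts on $E_{T^c}$ through the iterated coproduct as $\sum_{q=1}^p E_q \cdot (-)$, which kills the term for $q \in T^c$ and, for $q = i_m \in T$, inserts $E_q$ with sign $(-1)^{\#\{j \in T^c : j < q\}}$. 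Using $\pi(\Pc) \equiv p$, $\phi_I(e_{i_m}) = (-1)^{p - i_m + 1}$, and $\#\{j \in T^c : j < i_m\} = (i_m - 1) - (m-1)$, both sides collapse to the same sign $(-1)^{i_m - m}$ on each term. Combining this with the previous step and the bijection on bases yields the asserted bimodule isomorphism.

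I expect the only real work to be this sign bookkeeping: the orientation-dependent signs in $\phi_{I_q}$ and $\phi_I$, the two conventions $(-1)^{i-1}$ versus $(-1)^{k-i}$ in \eqref{eq:PhiIDef}, the parity prefactor $(-1)^{\pi(F)}$, and the Koszul-type reordering signs both in $(\Z[E]/(E^2))^{\otimes p}$ and in the iterated coproduct. The arcs in Figure~\ref{fig:OpenPantsBasis} must be oriented in exactly the alternating way described, so that $\phi_{I_q}(e_q) = (-1)^{p-q}$; with a different orientation convention one would have to twist \eqref{eq:OpenPantsIso} by basis-dependent signs. An alternative route is induction on $p$, stripping off one incoming interval at a time and starting from the fact that the one-interval identity cobordism is assigned $\Z[E]/(E^2)$ as a bimodule over itself; but since Theorem~\ref{thm:MainIntro} is only established later, this still reduces to the same computation carried out incrementally, so I would prefer the direct verification above.
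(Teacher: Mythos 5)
Your proposal is correct and follows essentially the same route as the paper's proof: a basis-by-basis check that the identification \eqref{eq:OpenPantsIso} respects degree and parity (via Example~\ref{ex:TensorShifts}) and intertwines the generator actions, with the same sign bookkeeping coming from the alternating orientations ($\phi_I(e_r)=(-1)^{p-r+1}$, $\phi_{I_q}(e_q)=(-1)^{p-q}$), the $(-1)^{i-1}$ versus $(-1)^{k-i}$ conventions, the $(-1)^{\pi(\Pc)}$ prefactor, and the Koszul reordering counts $i_m-m$ and $(p-i_m)-(k-m)$, which match the paper's computations exactly.
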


\begin{proof}
    Example~\ref{ex:TensorShifts} ensures that the correspondence \eqref{eq:OpenPantsIso} respects degrees and parities; because it gives a bijection on basis elements, it is an isomorphism of $\Z$-graded super vector spaces. We want to check that \eqref{eq:OpenPantsIso} respects the left actions of $\Z[E]/(E^2)$ and the right actions of $(\Z[E]/(E^2))^{\otimes p}$.

    First we consider the left action of $E \in \Z[E]/(E^2)$. In $\Zb^{S_+}_{\delta,\pi}(\Pc)$,
    \begin{align*}
    E \cdot (\varepsilon_{\Pc} \otimes (e_{i_1} \wedge \cdots \wedge e_{i_k})) &= (-1)^{\pi({\Pc})} \varepsilon_{\Pc} \otimes (E \cdot (e_{i_1} \wedge \cdots \wedge e_{i_k})) \\
    &= (-1)^p \varepsilon_{\Pc} \otimes (E \cdot (e_{i_1} \wedge \cdots \wedge e_{i_k})).
    \end{align*}
    The action $E \cdot (e_{i_1} \wedge \cdots \wedge e_{i_k})$ is the sum, for $1 \leq r \leq k$, of
    \[
    (-1)^{r-1} e_{i_1} \wedge \cdots \wedge e_{i_{r-1}} \wedge E(e_{i_r}) \wedge e_{i_{r+1}} \wedge \cdots \wedge e_{i_k},
    \]
    and by our choice of orientation on $e_{i_r}$ we have 
    \[
    E(e_{i_r}) = (-1)^{p - i_r + 1} \cdot 1.
    \]
    It follows that $E \cdot (\varepsilon_{\Pc} \otimes (e_{i_1} \wedge \cdots \wedge e_{i_k}))$ is the sum, for $1 \leq r \leq k$, of
    \[
    (-1)^{i_r + r} e_{i_1} \wedge \cdots \wedge \widehat{e_{i_r}} \wedge \cdots \wedge e_{i_k},
    \]
    where $\widehat{e_{i_r}}$ means we remove the factor $e_{i_r}$ from the wedge product.

    Meanwhile, write
    \[
    E_{j_1} \cdots E_{j_{p-k}} = E \otimes \cdots \otimes E \otimes 1 \otimes E \otimes \cdots \cdots \cdots \otimes E \otimes 1 \otimes E \otimes \cdots \otimes E.
    \]
    When multiplying this expression by $E$ on the left, the first term will replace the $1$ in position $i_1$ with an $E$, picking up a sign of $i_1 - 1$. The second term will replace the $1$ in position $i_2$ with an $E$, picking up a sign of $i_2 - 2$. In general there will be $k$ terms, and for $1 \leq r \leq k$, the $r^{th}$ term will pick up a sign of $i_r - r$. Since
    \[
    (-1)^{i_r + r} = (-1)^{i_r - r},
    \]
    the identification \eqref{eq:OpenPantsIso} is compatible with the left actions of $\Z[E]/(E^2)$.

    Now consider the right action of $E$ in the $m^{th}$ factor of $(\Z[E]/(E^2))^{\otimes p}$, for $1 \leq m \leq p$; call this element $E_m$. We have
    \[
    (\varepsilon_{\Pc} \otimes (e_{i_1} \wedge \cdots \wedge e_{i_k})) \cdot E_m = 0
    \]
    unless $m = i_r$ for some $r$, and we have
    \begin{align*}
    &(\varepsilon_{\Pc} \otimes (e_{i_1} \wedge \cdots \wedge e_{i_k})) \cdot E_{i_r} \\
    &= (-1)^{k-r} \varepsilon_{\Pc} \otimes (e_{i_1} \wedge \cdots \wedge e_{i_{r-1}} \wedge (e_{i_r} \cdot E) \wedge e_{i_{r+1}} \wedge \cdots \wedge e_{i_k}).
    \end{align*}
    By our choice of orientation on $e_{i_r}$, we have $e_{i_r} \cdot E = (-1)^{p-i_r} \cdot 1$, so $(\varepsilon_F \otimes (e_{i_1} \wedge \cdots \wedge e_{i_k})) \cdot E_{i_r}$ equals
    \[
    (-1)^{p+k-i_r-r} \varepsilon_{\Pc} \otimes ( e_{i_1} \wedge \cdots \wedge \widehat{e_{i_r}} \wedge \cdots \wedge e_{i_k}).
    \]
    
    Meanwhile, writing $E_{j_1} \cdots E_{j_{n-k}}$ as a tensor product of $1$s and $E$s as above, we see that $(E_{j_1} \cdots E_{j_{n-k}}) \cdot E_m$ is also zero unless $m = i_r$ for some $r$. 
    \begin{itemize}
        \item For $r = k$, when computing $(E_{j_1} \cdots E_{j_{n-k}}) \cdot E_{i_k}$, to reach the $1$ that will be replaced by $E$, the odd element $E_{i_k}$ will need to commute past $p-i_k$ instances of $E$, picking up a sign of $(-1)^{p-i_k}$.
        \item For $r = k-1$, when computing $(E_{j_1} \cdots E_{j_{n-k}}) \cdot E_{i_{k-1}}$, the odd element $E_{i_{k-1}}$ will need to commute past $p - i_{k-1} - 1$ instances of $E$, picking up a sign of $(-1)^{k-i_{k-1} - 1}$. 
    \end{itemize}
    
    In general, when computing $(E_{j_1} \cdots E_{j_{n-k}}) \cdot E_{i_{k-q}}$, the odd element $E_{i_{k-q}}$ will need to commute past $p - i_{k-q} - q$ instances of $E$, picking up a sign of $(-1)^{p-i_{k-q} - q}$. Reindexing $k-q$ as $r$, we see that the sign in front of $(E_{j_1} \cdots E_{j_{n-k}}) \cdot E_{i_{r}}$ is $(-1)^{p-i_r - (k-r)}$, which equals $(-1)^{p+k-i_r-r}$ as desired.
\end{proof}

As mentioned in Example~\ref{ex:TensorShifts}, the correspondence \eqref{eq:OpenPantsIso} identifies the highest-degree element of $\Zb^{S_+}_{\delta,\pi}(\Pc)$ (defined up to a sign) with $\pm 1 \in (\Z[E]/(E^2))^{\otimes p}$.

\section{A gluing theorem with signs and gradings}

\subsection{The main gluing lemma}\label{sec:MainLemma}

The following is a version with signs and gradings of the main gluing lemma from \cite{ManionDHA}. Let $(F,\Lambda,S_+,S_-,\ell)$ be a sutured surface with all $S_+$ boundary components considered to be outgoing, and suppose that $I_1, I_2$ are interval components of $S_+$. As in \cite{ManionDHA}, there is a unique way up to homeomorphism to glue $I_1$ to $I_2$ and get an oriented surface $\overline{F}$, which can itself be viewed as a sutured surface with all $S_+$ boundary components outgoing (we will let $S_+$ denote the $S_+$ boundary of either $F$ or $\overline{F}$, determined by context). Let $\delta = \delta_{A_1,A_2,A_3,A_4}$ and $\pi = \pi_{N_1,N_2,N_3,N_4}$ for any $(A_1,A_2,A_3,A_4) \in \Q^4$ and $(N_1,N_2,N_3,N_4) \in \{0,1\}^4$. Let $E_1$ and $E_2$ denote the $E$-endomorphisms of $\Zb^{S_+}_{\delta,\pi}(F)$ arising from $I_1$ and $I_2$ respectively. 

\begin{lemma}[cf. Lemma 4.1 of \cite{ManionDHA}]\label{lem:MainGluing}
We have an isomorphism
\[
 \Zb^{S_+}_{\delta,\pi}(\overline{F})  \cong \frac{\Zb^{S_+}_{\delta,\pi}(F)}{\im(E_1 + E_2)}
\]
of $\Q$-graded super abelian groups, compatible with the left actions of $\Z[E]/(E^2)$ coming from $S_+$ intervals of $F$ that are not in $\{I_1,I_2\}$.
\end{lemma}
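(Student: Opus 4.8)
The plan is to mimic the proof of Lemma 4.1 in \cite{ManionDHA} but with careful attention to signs and to the $\Q$-grading shift, reducing everything to a statement about homology of the pair and the combinatorial degree/parity functions. First I would analyze the gluing at the level of the surface and its relative homology: gluing $I_1$ to $I_2$ to form $\overline{F}$ gives a long exact sequence (or a direct Mayer--Vietoris-type argument) relating $H_1(\overline{F}, S_+)$ to $H_1(F,S_+)$. Concretely, one sees $H_1(\overline{F},S_+) \cong H_1(F, S_+ \cup I_1 \cup I_2)$, and then the inclusion-induced long exact sequence of the triple $(F, S_+ \cup I_1 \cup I_2, S_+)$ identifies $H_1(\overline{F},S_+)$ with the cokernel of a connecting map $H_1(F, S_+\cup I_1\cup I_2, S_+) \to H_1(F,S_+)$; the point is that this connecting map is exactly (up to sign) the map induced by $E_1 + E_2$, since $\phi_{I_1} + \phi_{I_2}$ is the composite measuring the boundary hitting $I_1$ or $I_2$. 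So at the level of $\wedge^* H_1$, passing to the quotient by $\im(E_1+E_2)$ should be identified (as in the $\F_2$ case) with $\wedge^* H_1(\overline{F},S_+)$; I would cite or re-derive the $\F_2$ version and then track what the signs in \eqref{eq:PhiIDef} do to the induced map on exterior powers, checking that $\overline{\Phi_{I_1}} + \overline{\Phi_{I_2}}$ descends correctly.

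Next I would check the numerical bookkeeping, i.e. that the $\Q$-degree shift $\{\delta(\overline{F})\}$ and the parity prefactor $(\Z^{0|1})^{\otimes \pi(\overline{F})}$ match up with those of $\Zb^{S_+}_{\delta,\pi}(F)$ after quotienting. This is a direct computation: passing from $F$ to $\overline{F}$, the invariants $h$, the number of no-$S_+$ / no-$S_-$ non-closed components, the number of closed components, and the counts of $S_+$ intervals and circles all change in controlled ways (gluing two $S_+$ intervals removes two $S_+$ intervals, may merge components, may create a closed component or a circle in $S_+$, and $h$ drops by $1$ generically since $\im(E_1+E_2)$ is one-dimensional in the top relevant sense). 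Plugging into \eqref{eq:DeltaDef} and \eqref{eq:PiDef} I would verify $\delta(\overline{F}) = \delta(F)$ shifted appropriately for the quotient and $\pi(\overline{F}) \equiv \pi(F)$ modulo $2$ in the relevant sense; the $(A_1-1)/2$ coefficient on $S_+$ intervals is exactly engineered so this works, which is presumably why that family was chosen. Since $\delta$ and $\pi$ are additive under disjoint union and $E_1+E_2$ only touches the glued components, it suffices to check a few local cases (the two intervals lie on the same component vs. on distinct components, and whether gluing produces a closed component).

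Finally I would verify compatibility with the residual left $\Z[E]/(E^2)$ actions from the other $S_+$ intervals $I \notin \{I_1,I_2\}$. The map $E_I$ is built from $\phi_I$, which only sees the boundary component $I$ of $S_+$; since $I$ is unchanged by the gluing and $\phi_I$ is natural under the inclusion $F \hookrightarrow \overline{F}$, the endomorphism $E_I$ commutes with $E_1 + E_2$ (this is the same anticommutation/commutation phenomenon already noted: $E_I$ anticommutes with $E_{I_1}$ and $E_{I_2}$, but that is irrelevant for descent to the cokernel of $E_1+E_2$ — what matters is that $E_I(\im(E_1+E_2)) \subseteq \im(E_1+E_2)$, which follows from $E_I(E_1+E_2) = -(E_1+E_2)E_I$). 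Hence $E_I$ descends to the quotient, and one checks it matches the $E_I$-action on $\Zb^{S_+}_{\delta,\pi}(\overline{F})$ under the identification, which is immediate from naturality of $\phi_I$ plus the fact that the parity prefactors were matched in the previous step so the sign $(-1)^{\pi}$ appearing in $\overline{\Phi_I}'$ is consistent on both sides.

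**The main obstacle** I expect is not the topology (the $\F_2$ case already handles that) but the sign accounting: making sure that the signs $(-1)^{i-1}$ versus $(-1)^{k-i}$ in \eqref{eq:PhiIDef}, the prefactor sign $(-1)^{\pi(F)}$ in $\overline{\Phi_I}'$, and the Koszul signs from commuting odd maps past the $\varepsilon_F$ prefactor all conspire so that (a) $E_1 + E_2$ has a well-defined image whose cokernel carries the structure claimed, and (b) the isomorphism is genuinely $\Z$-linear and degree-preserving, not merely an $\F_2$ statement. I would organize this by first normalizing so that $I_1, I_2$ are ``adjacent'' in the ordering $\ell$ (so the relevant Koszul signs are trivial or a single global sign), proving the lemma there, and then observing that reordering changes both sides by the same sign.
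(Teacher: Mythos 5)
Your overall strategy (reduce to the $\F_2$ statement from \cite{ManionDHA}, then track signs, the $\Q$-degree shift, and parity through a short case analysis) is the same as the paper's, but two of your concrete steps fail as written, and they sit exactly where the new content of this lemma lives. First, the homological setup is garbled: since $I_1$ and $I_2$ are themselves components of $S_+$, the group $H_1(F,S_+\cup I_1\cup I_2)$ is just $H_1(F,S_+)$, so your claimed isomorphism $H_1(\overline{F},S_+)\cong H_1(F,S_+\cup I_1\cup I_2)$ is false whenever the rank drops (the generic case); the correct excision statement involves the image arc $J$ of $I_1\sim I_2$ in $\overline{F}$, namely $H_1(\overline{F},S_+\cup J)\cong H_1(F,S_+)$, followed by the triple $(\overline{F},S_+\cup J,S_+)$. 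Relatedly, saying that the quotient by $\im(E_1+E_2)$ ``is'' $\wedge^* H_1(\overline{F},S_+)$ as in the $\F_2$ case glosses the key point: when $\phi_{I_1}+\phi_{I_2}\neq 0$ the cokernel of this contraction is $\wedge^*\bigl(\ker(\phi_{I_1}+\phi_{I_2})\bigr)$ shifted \emph{up} by one exterior degree, and it is precisely the changes $\delta(\overline{F})=\delta(F)+1$ (forced by the $-A_1h$ and $((A_1-1)/2)\#\{S_+\textrm{ intervals}\}$ terms) and $\pi(\overline{F})=\pi(F)+1$ that absorb this, while in the degenerate cases where $E_1+E_2=0$ (the paper's Cases 1-1, 2-1b, 2-2b) $h$ does not drop and $\delta,\pi$ are unchanged. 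Your ``$\pi(\overline{F})\equiv\pi(F)$ in the relevant sense'' must be sharpened to this parity jump, since it is what makes the $(-1)^{\pi}$ prefactor signs agree on the two sides.

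Second, and more seriously, compatibility with the residual actions $E_I$, $I\notin\{I_1,I_2\}$, is not ``immediate from naturality of $\phi_I$'': there is no inclusion $F\hookrightarrow\overline{F}$, only a quotient map, and a relative cycle with an endpoint on $I_1$ or $I_2$ does not push forward to a relative cycle in $(\overline{F},S_+)$. Your anticommutation relation $E_I(E_1+E_2)=-(E_1+E_2)E_I$ does give descent of $E_I$ to the quotient, but matching the descended operator with the $E_I$-action on $\Zb^{S_+}_{\delta,\pi}(\overline{F})$ requires handling the case where $I$ shares an endpoint with the basis arcs attached to $I_1$ or $I_2$: upstairs $E_I$ acquires extra ``remove $e_1$'' or ``remove $e_2$'' terms, downstairs it acquires a ``remove the concatenated arc'' term, and these correspond or cancel only after fixing the orientation conventions and using $\pi(\overline{F})=\pi(F)+1$ (this is the content of the paper's Cases 1-3a/b/c and of the interval containing $q$ in Cases 2-1a and 2-2a, where most of the proof's effort goes). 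So your plan needs an explicit argument at that point rather than an appeal to naturality. The closing suggestion to normalize $I_1,I_2$ to be adjacent in the ordering $\ell$ is beside the point here: each operator $E_I$ is defined intrinsically from its interval, and the ordering data only enters when assembling the tensor-product algebra actions, not in the statement or proof of this lemma.
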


Before proving Lemma~\ref{lem:MainGluing} we will discuss a way of choosing bases for $\wedge^* H_1(F,S_+)$ following \cite[proof of Lemma 4.1]{ManionDHA}.

\begin{definition}\label{def:BasisChoice}
For a sutured surface $F$, choose a homeomorphism between $F$ and a finite disjoint union of ``standard'' sutured surfaces (Figure~\ref{fig:StandardSuturedSurface}), each consisting of a sphere connect-summed with some number of tori and with some number of open disks removed, with an even number of sutures on each boundary component. Then define a collection of arcs and circles in each standard sutured surface as follows.
\begin{itemize}
\item In each torus that was connect-summed on, take two circles giving a basis for the first homology.
\item For all of the boundary components intersecting $S_-$ nontrivially, except for one chosen component, take a circle around the boundary component.
\item Take a connected acyclic graph $\Gamma_F$ embedded in $F$ with vertices in $S_+$ (one vertex in each component of $S_+$).
\item Since we are working over $\Z$, we also need to choose orientations on the arcs and circles used to define this basis.
\end{itemize}
See Figure~\ref{fig:HomologyBasis} for an illustration. The circles and the edges of $\Gamma_F$ give a basis for $H_1(F,S_+)$, so subsets (corresponding to wedge products) of the set of arcs and circles give a basis for $\wedge^* H_1(F,S_+)$. The same construction gives a basis for $\Zb^{S_+}_{\delta,\pi}(F)$. We will refer to the circles and the edges of $\Gamma_F$ as basis arcs and basis circles.
\end{definition}

\begin{remark}
If we always have in mind the bases of Definition~\ref{def:BasisChoice}, which will be the case below, then the quantity $h = \rk H_1(F,S_+)$ is the number of basis arcs and circles. When self-gluing $F$ to get a new surface $\overline{F}$ below, to see how $h$ changes one can simply compare the number of basis arcs and circles before and after the self-gluing. Combinatorially, we have
\begin{equation}\label{eq:hFormula}
    \begin{aligned}
    h = &-2\#\{\textrm{components of }F\} + 2\#\{\textrm{genus}\} \\
    &+ \#\{\textrm{no-}S_+\textrm{ non-closed components of }F\} \\
    & + \#\{\textrm{no-}S_-\textrm{ non-closed components of }F\} \\
    &+ 2\#\{\textrm{closed components of }F\}\\
    & + \#\{S_+\textrm{ intervals}\} + \#\{S_+\textrm{ circles}\} \\
    & + \#\{S_-\textrm{ circles}\} +\#\{\textrm{boundary circles of }F\textrm{ with both }S_+\textrm{ and }S_-\}\\   
    \end{aligned}
\end{equation}
where by the genus of $F$ we mean the sum of the genus of each component of $F$.
\end{remark}

\begin{figure}
    \centering
    \includegraphics[scale=0.7]{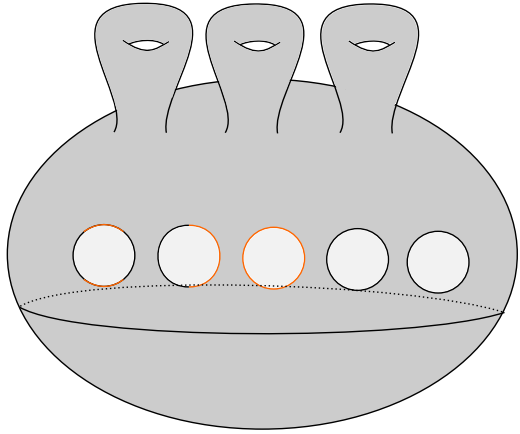}
    \caption{A standard (connected) sutured surface; on the boundary, $S_+$ is drawn in orange and $S_-$ is drawn in black. The sutures are the interface points between orange and black regions of the boundary.}
    \label{fig:StandardSuturedSurface}
\end{figure}

\begin{figure}
    \centering
    \includegraphics[scale=0.7]{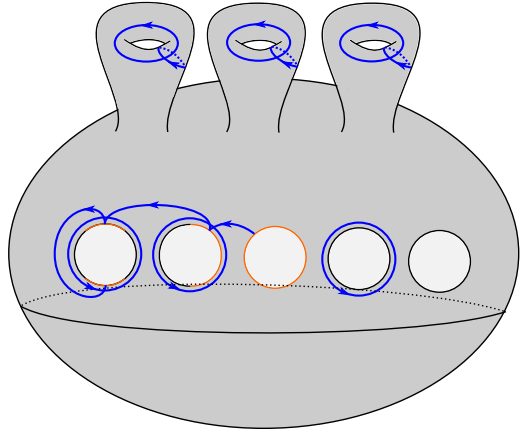}
    \caption{Examples of oriented arcs and circles used to specify a basis for $\wedge^* H_1(F,S_+)$. \textrm{   }}
    \label{fig:HomologyBasis}
\end{figure}

\begin{proof}[Proof of Lemma~\ref{lem:MainGluing}]
We will break the proof into cases.

\smallskip

\noindent \textbf{Case 1: $I_1$ and $I_2$ are on distinct components of $F$.} Assume that $I_1$ and $I_2$ live on distinct connected components of $F$. Begin by choosing a $\Z$-basis for $\wedge^* H_1(F,S_+)$ as in Definition~\ref{def:BasisChoice}; without loss of generality we may assume that $I_1$ and $I_2$ are each incident with at most one basis arc.

\smallskip

\noindent \textbf{Case 1-1: $I_1$ and $I_2$ are both alone.} Suppose that the components of $F$ containing $I_1$ and $I_2$ each contain no other components of $S_+$. In this case, no basis arcs are incident with either $I_1$ or $I_2$. We have $\delta(F) = \delta(\overline{F})$, $\pi(F) = \pi(\overline{F})$, and $H_1(F,S_+) \cong H_1(\overline{F},S_+)$. Both $E_1$ and $E_2$ act as zero on $\Zb^{S_+}_{\delta,\pi}(F)$, implying the statement of the lemma.

\smallskip

\noindent \textbf{Case 1-2: only $I_2$ is alone.} Now suppose exactly one of $\{I_1,I_2\}$ (without loss of generality $I_2$) is the unique component of $S_+$ in its component of $F$. In this case, $I_1$ is incident with one basis arc $e_1$, while $I_2$ is not incident with any basis arc. Without loss of generality, assume $e_1$ points from the surface into $I_1$.

The endomorphism $E_2$ of $\Zb^{S_+}_{\delta,\pi}(F)$ is zero. Let $\omega$ be a basis element for $\wedge^* H_1(F,S_+)$; the action of $E_1$ on $\varepsilon_F \otimes \omega$ is zero unless $e_1$ is the $i^{th}$ wedge factor of $\omega$ for some $i$, in which case write
\[
\omega = \gamma_1 \wedge \cdots \wedge \gamma_{i-1} \wedge e_1 \wedge \gamma_{i+1} \wedge \cdots \wedge \gamma_k
\]
where the $\gamma_j$ are basis elements for $H_1(F,S_+)$. If we set
\[
\omega' = \gamma_1 \wedge \cdots \wedge \gamma_{i-1} \wedge \gamma_{i+1} \wedge \cdots \wedge \gamma_k,
\]
then $E_1(\varepsilon_F \otimes \omega) = (-1)^{\pi(F) + i-1} \varepsilon_F \otimes \omega'$. We see that the quotient in the statement of the lemma amounts to setting $\varepsilon_F \otimes \omega'$ equal to zero whenever $\omega'$ is a basis element of $\wedge^* H_1(F,S_+)$ not divisible by $e_1$. Thus, we can take the elements
\[
\varepsilon_F \otimes \left( e_1 \wedge \omega' \right),
\]
where $\omega'$ is not divisible by $e_1$, as a $\Z$-basis for the quotient.

Now if we take all the basis arcs and circles, and remove $e_1$, the remaining arcs and circles can be used to define a basis of $H_1(\overline{F}, S_+)$. The corresponding $\Z$-basis for $\Zb^{S_+}_{\delta,\pi}(\overline{F})$ is given by the set of elements $\varepsilon_{\overline{F}} \otimes \omega'$ where $\omega'$ is as above. The identification
\begin{equation}\label{eq:Case1-2Identification}
\varepsilon_F \otimes \left( e_1 \wedge \omega' \right) \quad \leftrightarrow \quad \varepsilon_{\overline{F}} \otimes \omega'
\end{equation}
gives a map from the quotient to $\Zb^{S_+}_{\delta,\pi}(\overline{F}))$.

When passing from $F$ to $\overline{F}$, the changes in quantities relevant for the definitions of $\delta$ and $\pi$ are as follows.
\begin{itemize}
    \item $h$ is decreased by $1$.
    \item The number of $S_+$ intervals is decreased by $2$.
\end{itemize}
The change in $\delta$ is thus $A_1 - (A_1 - 1) = +1$, and the change in $\pi$ is $-1 -2N_3 = 1$ modulo $2$. Thus, \eqref{eq:Case1-2Identification} is an even map of degree zero; it is an isomorphism of super abelian groups because it gives a bijection on basis elements.

To see that this isomorphism intertwines the remaining actions, let $I \notin \{I_1,I_2\}$ be an interval component of $S_+$. First assume $e_1$ is not incident with $I$. In this case, the endomorphism $E_I$ of $\frac{\Zb^{S_+}_{\delta,\pi}(F)}{\im (E_1 + E_2)}$ corresponding to $I$ sends a basis element $\varepsilon_F \otimes (e_1 \wedge \omega')$ to $(-1)^{\pi(F) + 1} \varepsilon_F \otimes \left( e_1 \wedge E_I(\omega')\right)$. Meanwhile, the endomorphism $E_I$ of $\Zb^{S_+}_{\delta,\pi}(\overline{F})$ sends $\varepsilon_{\overline{F}} \otimes \omega'$ to $(-1)^{\pi(\overline{F})} \varepsilon_{\overline{F}} \otimes E_I(\omega')$. Our identification sends
\[
(-1)^{\pi(F) + 1} \varepsilon_F \otimes \left( e_1 \wedge E_I(\omega')\right) \quad \leftrightarrow \quad (-1)^{\pi(\overline{F})} \varepsilon_{\overline{F}} \otimes E_I(\omega')
\]
because $\pi(\overline{F}) = \pi(F) - 1$. If $e_1$ is incident with $I$, the analysis is unchanged because the extra term we would get in $\Zb^{S_+}_{\delta,\pi}(F)$ vanishes in the quotient 
\[
\frac{\Zb^{S_+}_{\delta,\pi}(F)}{\im (E_1 + E_2)} = \frac{\Zb^{S_+}_{\delta,\pi}(F)}{\im (E_1)}.
\]

\smallskip

\noindent \textbf{Case 1-3: neither $I_1$ nor $I_2$ is alone.} Now we consider the generic case; for $j \in \{1,2\}$, $I_j$ is incident with a unique arc $e_j$ used to define the basis of $H_1(F,S_+)$. Orient $e_1$  so that it points from the surface into $I_1$, and orient $e_2$ so that it points from $I_2$ into the surface. Choose the other orientations arbitrarily. 

As a left module over $(\Z[E]/(E^2))^{\otimes 2}$ (with $j^{th}$ factor corresponding to $E_j$ for $j \in \{1,2\}$), $\Zb^{S_+}_{\delta,\pi}(F)$ is free with a basis given by the set of elements
\[
\varepsilon_F \otimes (e_1 \wedge e_2 \wedge \omega'),
\]
where $\omega'$ is a $\Z$-basis element for $\wedge^* H_1(F,S_+)$ with neither $e_1$ nor $e_2$ as factors. Applying $E_1$ (which ``removes $e_1$'') to $\varepsilon_F \otimes (e_1 \wedge e_2 \wedge \omega')$ gives 
\[
(-1)^{\pi(F)} \varepsilon_F \otimes (e_2 \wedge \omega'),
\]
applying $E_1$ to $\varepsilon_F \otimes (e_1 \wedge \omega')$ gives
\[
(-1)^{\pi(F)} \varepsilon_F \otimes \omega',
\]
and applying $E_1$ to $\varepsilon_F \otimes (e_2 \wedge \omega')$ or $\varepsilon_F \otimes \omega'$ gives zero. Similarly, applying $E_2$ (which ``removes $-e_2$'') to $\varepsilon_F \otimes (e_1 \wedge e_2 \wedge \omega')$ gives
\[
(-1)^{\pi(F)} \varepsilon_F \otimes (e_1 \wedge \omega'),
\]
applying $E_2$ to $\varepsilon_F \otimes (e_2 \wedge \omega')$ gives
\[
(-1)^{\pi(F) + 1} \varepsilon_F \otimes \omega',
\]
and applying $E_2$ to $\varepsilon_F \otimes (e_1 \wedge \omega')$ or $\varepsilon_F \otimes \omega'$ gives zero.

It follows that a $\Z$-basis for $\frac{\Zb^{S_+}_{\delta,\pi}(F)}{\im(E_1 + E_2)}$ can be specified by taking the elements $\varepsilon_F \otimes (e_1 \wedge e_2 \wedge \omega')$ along with the elements
\[
\varepsilon_F \otimes (e_1 \wedge \omega') = -\varepsilon_F \otimes (e_2 \wedge \omega').
\]

Now, if we let $e$ be the oriented arc in $\overline{F}$ obtained by concatenating $e_1$ and $e_2$, then a $\Z$-basis for $\Zb^{S_+}_{\delta,\pi}(\overline{F})$ is given by the set of elements $\varepsilon_{\overline{F}} \otimes (e \wedge \omega')$ and $\varepsilon_{\overline{F}} \otimes \omega'$ where $\omega'$ runs over the same set as above. Make the identifications
\[
\varepsilon_F \otimes (e_1 \wedge e_2 \wedge \omega') \quad \leftrightarrow \quad  \varepsilon_{\overline{F}} \otimes (e \wedge \omega')
\]
and
\[
\varepsilon_F \otimes (e_1 \wedge \omega') \quad \leftrightarrow \quad  \varepsilon_{\overline{F}} \otimes \omega';
\]
this is an even map of degree zero because $\pi(\overline{F}) = \pi(F) + 1$ (the same argument applies as in the previous case). We get an isomorphism of $\Z$-graded super abelian groups as claimed in the statement of the lemma.

To see that this isomorphism intertwines the remaining actions, let $I \notin \{I_1, I_2\}$ be an interval component of $S_+$.

\smallskip

\noindent \textbf{Case 1-3a: $I$ is disjoint from $\{e_1,e_2\}$.} First assume that neither $e_1$ nor $e_2$ is incident with $I$. In this case, the endomorphism $E_I$ of $\frac{\Zb^{S_+}_{\delta,\pi}(F)}{\im (E_1 + E_2)}$ corresponding to $I$ sends a basis element $\varepsilon_F \otimes (e_1 \wedge e_2 \wedge \omega')$ to 
\[
(-1)^{\pi(F)} \varepsilon_F \otimes \left( e_1 \wedge e_2 \wedge E_I(\omega')\right).
\]
It sends a basis element $\varepsilon_F \otimes (e_1 \wedge \omega')$ to 
\[
(-1)^{\pi(F) + 1} \varepsilon_F \otimes (e_1 \wedge E_I(\omega')).
\]
Meanwhile, the endomorphism $E_I$ of $\Zb^{S_+}_{\delta,\pi}(\overline{F}))$ sends $\varepsilon_{\overline{F}} \otimes (e \wedge \omega')$ to 
\[
(-1)^{\pi(\overline{F}) + 1} \varepsilon_{\overline{F}} \otimes (e \wedge E_I(\omega')) = (-1)^{\pi(F)} \varepsilon_{\overline{F}} \otimes (e \wedge E_I(\omega'))
\]
and it sends $\varepsilon_{\overline{F}} \otimes \omega'$ to 
\[
(-1)^{\pi(\overline{F})} \varepsilon_{\overline{F}} \otimes E_I(\omega') = (-1)^{\pi(F) + 1} \varepsilon_{\overline{F}} \otimes E_I(\omega').
\]
Our identification sends
\[
(-1)^{\pi(F)} \varepsilon_F \otimes \left( e_1 \wedge e_2 \wedge E_I(\omega')\right) \quad \leftrightarrow \quad (-1)^{\pi(F)} \varepsilon_{\overline{F}} \otimes (e \wedge E_I(\omega'))
\]
and
\[
(-1)^{\pi(F) + 1} \varepsilon_F \otimes (e_1 \wedge E_I(\omega')) \leftrightarrow (-1)^{\pi(F) + 1} \varepsilon_{\overline{F}} \otimes E_I(\omega'),
\]
so it intertwines the actions of $E_I$ on the two sides.

\smallskip

\noindent \textbf{Case 1-3b: $I$ is incident with $e_1$.} Now suppose $I$ is incident with $e_1$; because $e_1$ points from the surface into $I_1$, it must point from $I$ into the surface. The same is true for $e$ in $\overline{F}$.

Some terms of the action on $E_I$ on each side (where neither $e_1$ nor $e$ is removed) are similar to the ones in Case 1-3a and thus correspond to each other under our identification. We also have a new term of $E_I(\varepsilon_F \otimes (e_1 \wedge e_2 \wedge \omega'))$, namely the ``remove $(-e_1)$'' term
\[
(-1)^{\pi(F) + 1} \varepsilon_F \otimes (e_2 \wedge \omega') = (-1)^{\pi(F)} \varepsilon_F \otimes (e_1 \wedge \omega'),
\]
and a new term of $E_I(\varepsilon_{\overline{F}} \otimes (e \wedge \omega'))$, namely the ``remove $(-e)$'' term
\[
(-1)^{\pi(\overline{F}) + 1} \varepsilon_F \otimes \omega' = (-1)^{\pi(F)} \varepsilon_F \otimes \omega'.
\]
These terms correspond to each other under our identification.

\smallskip

\noindent \textbf{Case 1-3c: $I$ is incident with $e_2$.}

In this case $e_2$ and $e$ point from the surface into $I$, so the new terms are the ``remove $e_2$'' term of $E_I(\varepsilon_F \otimes (e_1 \wedge e_2 \wedge \omega'))$, namely
\[
(-1)^{\pi(F) + 1} \varepsilon_F \otimes (e_1 \wedge \omega'),
\]
and the ``remove $e$'' term of $E_I(\varepsilon_{\overline{F}} \otimes (e \wedge \omega'))$, namely
\[
(-1)^{\pi(\overline{F})} \varepsilon_{\overline{F}} \otimes \omega' = (-1)^{\pi(F) + 1} \varepsilon_{\overline{F}} \otimes \omega'.
\]
These terms correspond to each other under our identification.

\smallskip

\noindent \textbf{Case 2: $I_1$ and $I_2$ are on the same component of $F$.} We consider some further subcases.

\smallskip

\noindent \textbf{Case 2-1: $I_1$ and $I_2$ are on the same component of $\partial F$.} Let $\gamma$ be the component of $\partial F$ containing $I_1$ and $I_2$. When applying Definition~\ref{def:BasisChoice} to choose a basis for $H_1(F,S_+)$, choose $\gamma$ as the unique not-fully-$S_+$ boundary component of its component of $F$ that does not get a basis circle around it, and such that $I_1$ and $I_2$ are incident with a single basis arc each, say $e_1$ and $e_2$ respectively (it is possible that $e_1 = e_2$). Orient $e_1$ so it points from the surface into $I_1$ and orient $e_2$ so it points from $I_2$ into the surface; this makes sense even if $e_1 = e_2$.

\smallskip

\begin{figure}
    \centering
    \includegraphics[scale=0.82]{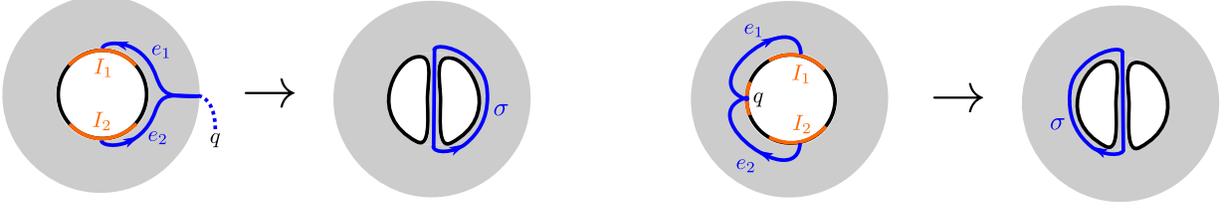}
    \caption{Local model near the circle $C$ for the arcs $e_1$ and $e_2$ and the circle $\sigma$ in Case 2-1a.}
    \label{fig:NearCircleSigns}
\end{figure}

\noindent \textbf{Case 2-1a: $I_1$ and $I_2$ are not alone in their component of $F$.} Assume the component of $F$ containing $I_1$ and $I_2$ also contains at least one other $S_+$ interval or $S_+$ circle. In this case we can assume that $e_1 \neq e_2$, and we can also assume that $e_1$ and $e_2$ share an endpoint $q$, as in \cite[Figure 19]{ManionDHA} or the analogue of that figure when $q$ is on the same component of $\partial F$ as $I_1$ and $I_2$. See Figure~\ref{fig:NearCircleSigns} here for illustrations of both possibilities.

With respect to $e_1$ and $e_2$, there are four types of $\Z$-basis elements for $\Zb^{S_+}_{\delta,\pi}(F)$, namely those of type $\varepsilon_F \otimes (e_1 \wedge e_2 \wedge \omega')$, $\varepsilon_F \otimes (e_1 \wedge \omega')$, $\varepsilon_F \otimes (e_2 \wedge \omega')$, and $\varepsilon_F \otimes \omega'$ where $\omega'$ is not divisible by $e_1$ or $e_2$. We have
\[
E_1(\varepsilon_F \otimes (e_1 \wedge e_2 \wedge \omega')) = (-1)^{\pi(F)} \varepsilon_F \otimes (e_2 \wedge \omega')
\]
because $E_1$ removes $e_1$ from the left, and
\[
E_2(\varepsilon_F \otimes (e_1 \wedge e_2 \wedge \omega')) = (-1)^{\pi(F)} \varepsilon_F \otimes (e_1 \wedge \omega')
\]
because $E_2$ removes $(-e_2)$ from the left. Thus, in the quotient $\frac{\Zb^{S_+}_{\delta,\pi}(F)}{\im(E_1 + E_2)}$, we have
\[
\varepsilon_F \otimes (e_2 \wedge \omega') = -\varepsilon_F \otimes (e_1 \wedge \omega')
\]
as well as
\[
\varepsilon_F \otimes \omega' = 0.
\]

Meanwhile, to define $\Gamma_{\overline{F}}$, remove $e_1$ and $e_2$ as basis arcs and add a basis circle $\sigma$ oriented compatibly with $e_1$ and $e_2$ (see Figure~\ref{fig:NearCircleSigns}).  There are two types of basis elements of $\Zb^{S_+}_{\delta,\pi}(\overline{F})$, namely those of type $\varepsilon_F \otimes (\sigma \wedge \omega')$ and $\varepsilon_F \otimes \omega'$ where $\omega'$ is not divisible by $\sigma$. Make the identifications
\begin{equation}\label{eq:Case2-1aIdentificationA}
\varepsilon_F \otimes (e_1 \wedge e_2 \wedge \omega') \quad \leftrightarrow \quad \varepsilon_{\overline{F}} \otimes (\sigma \wedge \omega')
\end{equation}
and
\begin{equation}\label{eq:Case2-1aIdentificationB}
\varepsilon_F \otimes (e_1 \wedge \omega') \quad \leftrightarrow \quad \varepsilon_{\overline{F}} \otimes \omega'.
\end{equation}

When passing from $F$ to $\overline{F}$, the changes in quantities relevant for the definitions of $\delta$ and $\pi$ are the same as the ones described above. The change in $\delta$ is thus $A_1 - (A_1 - 1) = +1$, and the change in $\pi$ is $-1 -2N_3 = 1$ modulo $2$. Because $\pi(\overline{F}) = \pi(F) + 1$, the identifications \eqref{eq:Case2-1aIdentificationA} and \eqref{eq:Case2-1aIdentificationB} give an even map of degree zero and thus an isomorphism of $\Z$-graded super abelian groups. The relation $\pi(\overline{F}) = \pi(F) + 1$ also ensures that this identification intertwines the actions of $E_{I'}$ for any $S_+$ interval component $I'$ not in $\{I_1, I_2\}$, except possibly the interval $I$ containing $q$ (if $q$ is contained in an $S_+$ interval rather than an $S_+$ circle). If $q$ is contained in an $S_+$ interval $I$, we have
\begin{align*}
&E_I(\varepsilon_F \otimes (e_1 \wedge e_2 \wedge \omega')) \\ 
&= (-1)^{\pi(F)} \varepsilon_F \otimes (e_2 \wedge \omega') + (-1)^{h\pi(F) + 2} \varepsilon_F \otimes (e_1 \wedge \omega') + (-1)^{\pi(F) + 2} \varepsilon_F \otimes (e_1 \wedge e_2 \wedge E_I(\omega')) \\
&= (-1)^{\pi(F)} \left(- \varepsilon_F \otimes (e_1 \wedge \omega') + \varepsilon_F \otimes (e_1 \wedge \omega') + \varepsilon_F \otimes (e_1 \wedge e_2 \wedge E_I(\omega')) \right) \\
&= (-1)^{\pi(F)} \varepsilon_F \otimes (e_1 \wedge e_2 \wedge E_I(\omega'))
\end{align*}
and
\[
E_I(\varepsilon_F \otimes (e_1 \wedge \omega')) = (-1)^{\pi(F) + 1} \varepsilon_F \otimes (e_1 \wedge E_I(\omega'))
\]
(note that $\varepsilon_F \otimes \omega'$ is zero in our quotient). On the $\overline{F}$ side, we have
\[
E_I(\varepsilon_{\overline{F}} \otimes (\sigma \wedge \omega')) = (-1)^{\pi(\overline{F}) + 1} \varepsilon_{\overline{F}} \otimes (\sigma \wedge E_I(\omega'))
\]
and
\[
E_I(\varepsilon_{\overline{F}} \otimes \omega') = (-1)^{\pi(\overline{F})} \varepsilon_{\overline{F}} \otimes E_I(\omega').
\]
Thus, our identification intertwines the actions of $E_I$.

\smallskip

\noindent \textbf{Case 2-1b: $I_1$ and $I_2$ are alone in their component of $F$.} Now assume that $I_1$ and $I_2$ are the only components of $S_+$ in their component of $F$; we have $e_1 = e_2 =: e$. Our chosen arcs and circles near $\gamma$ in $F$ and $\overline{F}$ are shown in Figure~\ref{fig:NearCircleSpecialSigns}.

\begin{figure}
    \centering
    \includegraphics[scale=0.84]{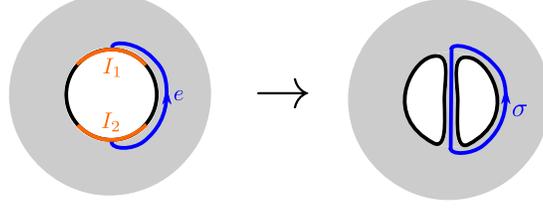}
    \caption{Local model near the circle $\gamma$ for the arc $e$ and the circle $\sigma$ in Case 2-1b.}
    \label{fig:NearCircleSpecialSigns}
\end{figure}

There are two types of basis elements for $\Zb^{S_+}_{\delta,\pi}(F)$, namely $\varepsilon_F \otimes (e \wedge \omega')$ and $\varepsilon_F \otimes \omega'$ where $\omega'$ is not divisible by $e$. We have
\[
E_1(\varepsilon_F \otimes (e \wedge \omega')) = (-1)^{\pi(F)} \varepsilon_F \otimes \omega'
\]
because $E_1$ removes $e$ from the left and
\[
E_2(\varepsilon_F \otimes (e \wedge \omega')) = (-1)^{\pi(F) + 1} \varepsilon_F \otimes \omega'
\]
because $E_2$ removes $(-e)$ from the left. These sum to zero, so the denominator in the quotient expression $\frac{\Zb^{S_+}_{\delta,\pi}(F)}{\im(E_1 + E_2)}$ is trivial. Meanwhile, there are two types of basis elements for $\Zb^{S_+}_{\delta,\pi}(\overline{F})$, namely $\varepsilon_{\overline{F}} \otimes (\sigma \wedge \omega')$ and $\varepsilon_{\overline{F}} \otimes \omega'$. Make the identifications
\begin{equation}\label{eq:Case2-1bIdentificationA}
\varepsilon_F \otimes (e \wedge \omega') \quad \leftrightarrow \quad \varepsilon_{\overline{F}} \otimes (\sigma \wedge \omega')
\end{equation}
and
\begin{equation}\label{eq:Case2-1bIdentificationB}
\varepsilon_F \otimes \omega' \quad \leftrightarrow \quad \varepsilon_{\overline{F}} \otimes \omega'.
\end{equation}

When passing from $F$ to $\overline{F}$, the changes in quantities relevant for the definitions of $\delta$ and $\pi$ are as follows.
\begin{itemize}
    \item The number of $S_+$ intervals is decreased by $2$.
    \item The number of no-$S_+$ non-closed components of $F$ is increased by $1$.
\end{itemize}
The change in $\delta$ is thus $(A_1 - 1) - (A_1 - 1) = 0$, and the change in $\pi$ is also zero. Thus, \eqref{eq:Case2-1bIdentificationA} and \eqref{eq:Case2-1bIdentificationB} give an even map of degree zero which is an isomorphism of super abelian groups because it gives a bijection on basis elements. The remaining actions of $E_I$ for $S_+$ intervals $I \notin \{I_1,I_2\}$ all take place in components of $F$ that are disjoint from the component containing $I_1$ and $I_2$, so our identification intertwines these remaining actions.

\smallskip

\noindent \textbf{Case 2-2: $I_1$ and $I_2$ are on different components of $\partial F$.} Let $\gamma_1$ and $\gamma_2$ be the components of $\partial F$ containing $I_1$ and $I_2$ respectively. In the standard model of Figure~\ref{fig:StandardSuturedSurface}, we may draw $\gamma_1$ and $\gamma_2$ as the two leftmost boundary circles.

\smallskip

\noindent \textbf{Case 2-2a: $I_1$ and $I_2$ are not alone in their component of $F$.} Assume the component of $F$ containing $I_1$ and $I_2$ also contains at least one other $S_+$ interval or $S_+$ circle; we can thus assume that $e_1 \neq e_2$. We can also assume that $e_1$ and $e_2$ share a vertex $q$ and that $\gamma_1$ is the chosen not-fully-$S_+$ boundary component in this component of $F$ where we do not draw a basis circle around $\gamma_1$. Let $\sigma$ be the basis circle around $\gamma_2$; see Figure~\ref{fig:AddingGenusSigns}.

\begin{figure}
    \centering
    \includegraphics[scale=0.78]{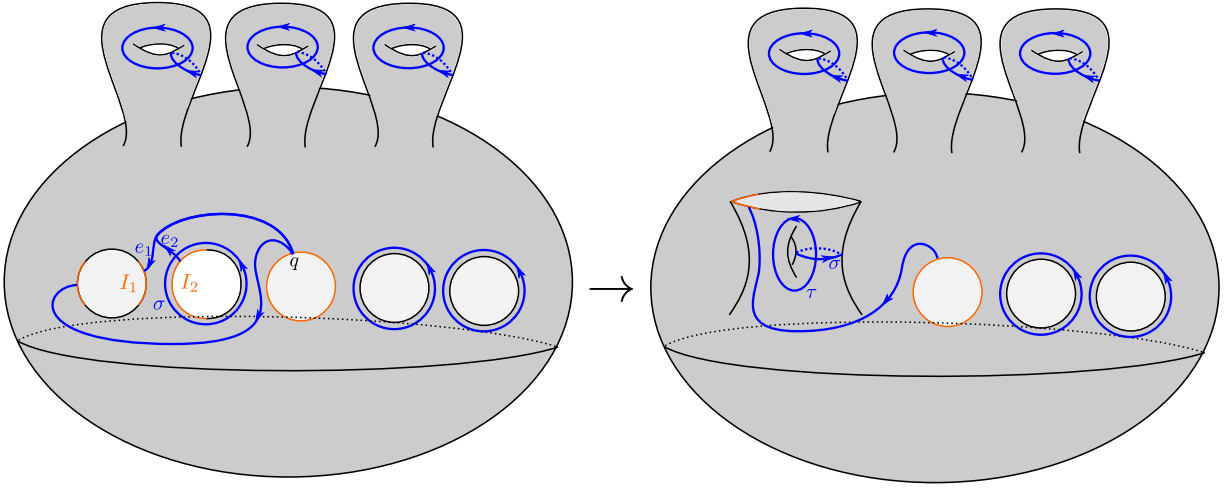}
    \caption{Before and after gluing $I_1$ to $I_2$, when $I_1$ and $I_2$ are on the same component of $F$ but different components of $\partial F$.}
    \label{fig:AddingGenusSigns}
\end{figure}

We partition the basis elements of $\Zb^{S_+}_{\delta,\pi}(F)$ into four types, namely $\varepsilon_F \otimes (e_1 \wedge e_2 \wedge \omega')$, $\varepsilon_F \otimes (e_1 \wedge \omega')$, $\varepsilon_F \otimes (e_2 \wedge \omega')$, and $\varepsilon_F \otimes \omega'$ where $\omega$ is not divisible by $e_1$ or $e_2$ (but may be divisible by $\sigma$). In the quotient $\frac{\Zb^{S_+}_{\delta,\pi}(F)}{\im(E_1 + E_2)}$, we thus have basis elements of type
\[
\varepsilon_F \otimes (e_1 \wedge e_2 \wedge \omega')
\]
and
\[
\varepsilon_F \otimes (e_1 \wedge \omega') = -\varepsilon_F \otimes (e_2 \wedge \omega').
\]
Meanwhile, we choose a basis for $\Zb^{S_+}_{\delta,\pi}(\overline{F})$ by picking a homeomorphism from $\overline{F}$ to the standard sutured surface shown in Figure~\ref{fig:AddingGenusSigns}, sending $\sigma$ in $F$ to the circle $\sigma$ in $\overline{F}$. While the graph $\Gamma_{\overline{F}}$ can be taken to be $\Gamma_F$ with $e_1$ and $e_2$ removed, we also need an additional closed circle $\tau$ to define the basis of $\Zb^{S_+}_{\delta,\pi}(\overline{F})$ because our gluing operation increased the genus by one. The orientations on $e_1$ and $e_2$ give us an orientation on $\tau$ as depicted in Figure~\ref{fig:AddingGenusSigns}. We have two types of basis elements $\varepsilon_{\overline{F}} \otimes (\tau \wedge \omega')$ and $\varepsilon_{\overline{F}} \otimes \omega'$ for  $\Zb^{S_+}_{\delta,\pi}(\overline{F})$, where $\omega'$ is not divisible by $\tau$ but may be divisible by $\sigma$. Make the identifications
\begin{equation}\label{eq:Case2-2IdentificationA}
\varepsilon_F \otimes (e_1 \wedge e_2 \wedge \omega') \quad \leftrightarrow \quad \varepsilon_{\overline{F}} \otimes (\tau \wedge \omega')
\end{equation}
and
\begin{equation}\label{eq:Case2-2IdentificationB}
\varepsilon_F \otimes (e_1 \wedge \omega') \quad \leftrightarrow \quad \varepsilon_{\overline{F}} \otimes \omega'.
\end{equation}

When passing from $F$ to $\overline{F}$, the changes in quantities relevant for the definitions of $\delta$ and $\pi$ are as follows.
\begin{itemize}
    \item $h$ is decreased by $1$.
    \item The number of $S_+$ intervals is decreased by $2$.
\end{itemize}
The change in $\delta$ is thus $A_1 - (A_1 - 1) = +1$, and the change in $\pi$ is $-1 -2N_3 = 1$ modulo $2$. Thus, \eqref{eq:Case2-2IdentificationA} and \eqref{eq:Case2-2IdentificationB} give an even map of degree zero, which is an isomorphism of super abelian groups because it gives a bijection on basis elements. The isomorphism intertwines all actions of $E_{I'}$  for any $S_+$ interval component $I'$ not in $\{I_1,I_2\}$, except possibly the interval $I$ containing $q$ (if $I$ is an interval and not a circle). If $q$ is in an $S_+$ interval $I$, we have
\begin{align*}
&E_I(\varepsilon_F \otimes (e_1 \wedge e_2 \wedge \omega')) \\
&= (-1)^{\pi(F)+1}(\varepsilon_F \otimes (e_2 \wedge \omega')) + (-1)^{\pi(F) + 1}(\varepsilon_F \otimes (e_1 \wedge \omega')) \\
&\quad+ (-1)^{\pi(F) + 2}\varepsilon_F \otimes (e_1 \wedge e_2 \wedge E_I(\omega')) \\
&= (-1)^{\pi(F)} \varepsilon_F \otimes (e_1 \wedge e_2 \wedge E_I(\omega'))
\end{align*}
and
\begin{align*}
&E_I(\varepsilon_F \otimes (e_1 \wedge \omega')) \\
&= (-1)^{\pi(F) + 1} \varepsilon_F \otimes \omega' + (-1)^{h_F + 1} \varepsilon_F \otimes (e_1 \wedge E_I(\omega')) \\
&= (-1)^{\pi(F)+ 1} \varepsilon_F \otimes (e_1 \wedge E_I(\omega')).
\end{align*}
Meanwhile, in $\overline{F}$ we have
\[
E_I(\varepsilon_{\overline{F}} \otimes (\tau \wedge \omega') = (-1)^{\pi(\overline{F}) + 1} \varepsilon_{\overline{F}} \otimes (\tau \wedge E_I(\omega'))
\]
and
\[
E_I(\varepsilon_{\overline{F}} \otimes \omega') = (-1)^{\pi(\overline{F})} \varepsilon_{\overline{F}} \otimes E_I(\omega').
\]
The relation $\pi(\overline{F}) = \pi(F) + 1$ ensures that our identification intertwines the actions of $E_I$.

\smallskip

\noindent \textbf{Case 2-2b: $I_1$ and $I_2$ are alone in their component of $F$.} If there are no other $S_+$ intervals or $S_+$ circles in the component of $F$ containing $I_1$ and $I_2$, then in Figure~\ref{fig:AddingGenusSigns} we should replace $e_1$ and $e_2$ by a single arc $e$ from $I_2$ to $I_1$. The two possible types of basis element for $\Zb^{S_+}_{\delta,\pi}(F)$ are $\varepsilon_F \otimes (e \wedge \omega')$ and $\varepsilon_F \otimes \omega'$. We have
\[
E_1(\varepsilon_F \otimes (e \wedge \omega')) =  (-1)^{\pi(F)} \varepsilon_F \otimes \omega'
\]
and
\[
E_2(\varepsilon_F \otimes (e \wedge \omega')) =  (-1)^{\pi(F)+1} \varepsilon_F \otimes \omega',
\]
so $E_1 + E_2$ is the zero map and the denominator of $\frac{\Zb^{S_+}_{\delta,\pi}(F)}{\im(E_1 + E_2)}$ is trivial. Meanwhile, the two possible types of basis element for $\Zb^{S_+}_{\delta,\pi}(\overline{F})$ are $\varepsilon_{\overline{F}} \otimes (\tau \wedge \omega')$ and $\varepsilon_{\overline{F}} \otimes \omega'$; make the identifications
\begin{equation}\label{eq:Case2-2bIdentificationA}
\varepsilon_F \otimes (e \wedge \omega') \quad \leftrightarrow \quad \varepsilon_{\overline{F}} \otimes (\tau \wedge \omega')
\end{equation}
and
\begin{equation}\label{eq:Case2-2bIdentificationB}
\varepsilon_F \otimes \omega' \quad \leftrightarrow \quad \varepsilon_{\overline{F}} \otimes \omega'.
\end{equation}
When passing from $F$ to $\overline{F}$, the changes in quantities relevant for the definitions of $\delta$ and $\pi$ are as follows.
\begin{itemize}
    \item The number of $S_+$ intervals is decreased by $2$.
    \item The number of no-$S_+$ non-closed components of $F$ is increased by $1$.
\end{itemize}
The change in $\delta$ is thus $(A_1 - 1) - (A_1 - 1) = 0$, and the change in $\pi$ is also zero. Thus, \eqref{eq:Case2-2bIdentificationA} and \eqref{eq:Case2-2bIdentificationB} give an even map of degree zero which is an isomorphism of super abelian groups because it gives a bijection on basis elements. Since all $S_+$ interval components $I \notin \{I_1,I_2\}$ of $F$ live on a different component of $F$ than $I_1$ and $I_2$, the isomorphism intertwines the remaining actions of $E_I$.
\end{proof}

\subsection{Proof of Theorem~\ref{thm:MainIntro}}\label{sec:MainProof}

\begin{proof}[Proof of Theorem~\ref{thm:MainIntro}]
The right side of the isomorphism in the statement of Theorem~\ref{thm:MainIntro} can be written as the quotient
\[
\frac{\Zb^{S_+}_{\delta,\pi}(F') \otimes_{\Z} \Zb^{S_+}_{\delta,\pi}(F)}{\spann_{\Z}\{(\varepsilon_{F'} \otimes x)a \otimes (\varepsilon_{F} \otimes y) - (\varepsilon_{F'} \otimes x) \otimes a(\varepsilon_{F} \otimes y)\}}
\]
where, in the denominator, $a$ is an arbitrary multiplicative generator 
\[
a = 1 \otimes \cdots \otimes 1 \otimes E \otimes 1 \otimes \cdots \otimes 1
\]
of $A(M_2)$, $x$ is an arbitrary basis element of $\wedge^* H_1(F', S_+)$, and $y$ is an arbitrary basis element of $\wedge^* H_1(F, S_+)$ (we will let $S_+$ denote the $S_+$ boundary of any sutured surface, determined by context).

The $\Q$-graded super abelian group\footnote{The notation ``$\mathrm{Left}_{A(M_1)}$'' means that one should view the right action by the (super)commutative superalgebra $A(M_1)$ as a left action instead, where elements of $A(M_1)$ pick up a sign as they pass from the left side to the right side in order to act by the original right action. The notation ``$\mathrm{Right}_{A(M_1)}$'' below is similar.}
\[
\mathrm{Left}_{A(M_1)} \left[ \Zb^{S_+}_{\delta,\pi}(F') \otimes_{\Z} \Zb^{S_+}_{\delta,\pi}(F) \right]
\]
with left actions of $A(M_3)$ and $A(M_1)$ is isomorphic as a $\Q$-graded super abelian group to 
\[
\Zb^{S_+}_{\delta,\pi}((F' \sqcup F)_{\mathrm{left}}),
\]
where $(F' \sqcup F)_{\mathrm{left}}$ is $F' \sqcup F$ with all $S_+$ boundary components viewed as outgoing, via the map $\Phi$ sending
 \[
 \varepsilon_{F'} \otimes x \otimes \varepsilon_{F} \otimes y \mapsto (-1)^{|x| \pi(F)} \varepsilon_{(F' \sqcup F)_{\mathrm{left}}} \otimes (x \wedge y).
 \]
Note that $\Phi$ is compatible with the left actions of $E$ for outgoing intervals of $F'$, and it relates the left actions of $E$ for incoming intervals of $F$ by a minus\footnote{Schematically, if we apply an odd ``remove $O$'' operator on the left of the sequence $XXOXXX$ where both $X$ and $O$ are odd, we move past two $X$'s and get two minus signs before we remove the $O$ ``from the left.'' On the other hand, if we first move this operator all the way to the right (picking up six minus signs) and then move it back to the left until it can cancel the $O$ ``from the right,'' we get three more minus signs for a total of nine. Nine minus signs give a different overall sign than two minus signs, and this pattern holds in general.} sign.
 
 The map $\Phi$ sends $(\varepsilon_{F'} \otimes x)a \otimes (\varepsilon_{F} \otimes y)$ to
\[
(-1)^{(|x|+1)\pi(F)} \varepsilon_{(F' \sqcup F)_{\mathrm{left}}} \otimes (xa \wedge y)
\]
where $xa$ is still computed in $\Zb^{S_+}_{\delta,\pi}(F')$. In terms of the left action $\bullet_1$ (coming from $F'$) of $A(M_2)$ on $\Zb^{S_+}_{\delta,\pi}((F' \sqcup F)_{\mathrm{left}})$, we can write this element as
\[
-(-1)^{(|x|+1)\pi(F) + \pi(F') + \pi(F) + |x|} a \bullet_1 \left(\varepsilon_{(F' \sqcup F)_{\mathrm{left}}} \otimes (x \wedge y)\right),
\]
which simplifies to
\[
-(-1)^{|x|\pi(F) + \pi(F') + |x|} a \bullet_1 \left(\varepsilon_{(F' \sqcup F)_{\mathrm{left}}} \otimes (x \wedge y)\right);
\]
the extra minus sign comes from the same phenomenon as in the footnote above. Similarly, $\Phi$ sends 
\[
(\varepsilon_{F'} \otimes x) \otimes a(\varepsilon_F \otimes y) = (-1)^{\pi(F)} (\varepsilon_{F'} \otimes x) \otimes (\varepsilon_F \otimes ay)
\]
to
\[
(-1)^{\pi(F) + |x|\pi(F)} \varepsilon_{(F' \sqcup F)_{\mathrm{left}}} \otimes (x \wedge ay)
\]
where $ay$ is still computed in $\Zb^{S_+}_{\delta,\pi}(F)$. In terms of the left action $\bullet_2$ (coming from $F$) of $A(M_2)$ on $\Zb^{S_+}_{\delta,\pi}((F' \sqcup F)_{\mathrm{left}})$, we can write this element as
\[
(-1)^{\pi(F) + |x|\pi(F) + \pi(F') + \pi(F) + |x|} a \bullet_2 \left(\varepsilon_{(F' \sqcup F)_{\mathrm{left}}} \otimes (x \wedge y)\right),
\]
which simplifies to 
\[
(-1)^{|x|\pi(F) + \pi(F') + |x|} a \bullet_2 \left(\varepsilon_{(F' \sqcup F)_{\mathrm{left}}} \otimes (x \wedge y)\right),
\]

Thus, as $\Q$-graded super abelian groups, we can identify the right side of the isomorphism in the statement of the corollary with
\begin{equation}\label{eq:LeftSide}
\mathrm{Right}_{A(M_1)} \left[ \frac{\Zb^{S_+}_{\delta,\pi}((F' \sqcup F)_{\mathrm{left}})}{\spann_{\Z}\{a \bullet_1 \left(\varepsilon_{(F' \sqcup F)_{\mathrm{left}}} \otimes z \right) + a \bullet_2 \left(\varepsilon_{(F' \sqcup F)_{\mathrm{left}}} \otimes z\right)\}} \right]
\end{equation}
where, in the denominator, $a$ is an arbitrary multiplicative generator of $A(M_2)$ as above and $z$ is an arbitrary basis element of $\wedge^* H_1((F' \sqcup F)_{\mathrm{left}}, S_+)$. This identification respects the left actions of $E$ for outgoing intervals of $F'$, and it relates the right actions of $E$ for incoming intervals of $F$ by a minus sign.

The denominator of \eqref{eq:LeftSide} can be viewed as the sum, over all components of $M_2$ (each corresponding to a pair of intervals $\{I_1,I_2\}$ in the target of $(F' \sqcup F)_{\mathrm{left}}$), of the subspaces
\[
\im(E_1 + E_2)
\]
of the numerator of \eqref{eq:LeftSide}, where $E_1$ and $E_2$ are the endomorphisms of the numerator corresponding to $I_1$ and $I_2$ respectively. Let $\overline{F}$ denote the result of gluing together one such pair of intervals $\{I_1,I_2\}$ in $(F' \sqcup F)_{\mathrm{left}}$, both arising from a component $m$ of $M_2$. By Lemma~\ref{lem:MainGluing}, the quotient in \eqref{eq:LeftSide} is isomorphic to the quotient of $\Zb^{S_+}_{\delta,\pi}(\overline{F})$ by the sum, over all components $m'$ of $M_2$ other than $m$ (corresponding to pairs of intervals $I'_1$ and $I'_2$), of the subspaces $\im(E'_1 + E'_2)$ where $E'_1$ and $E'_2$ are the endomorphisms of $\Zb^{S_+}_{\delta,\pi}(\overline{F})$ corresponding to $I'_1$ and $I'_2$ respectively.

By induction on the number of components of $M_2$, we see that the quotient in \eqref{eq:LeftSide} is isomorphic to
\[
\Zb^{S_+}_{\delta,\pi}((F' \circ F)_{\mathrm{left}})
\]
compatibly with the left actions of $A(M_3)$ and $A(M_1)$. Thus, the right side of the isomorphism in the statement of the corollary is isomorphic to
\[
\mathrm{Right}_{A(M_1)} \left[\Zb^{S_+}_{\delta,\pi}((F' \circ F)_{\mathrm{left}})\right],
\]
compatibly with the left actions of $E$ for outgoing intervals of $F'$ and relating the right actions of $E$ for incoming intervals of $F$ by a minus sign, and thereby to
\[
\Zb^{S_+}_{\delta,\pi}(F' \circ F)
\]
compatibly with the left actions of $A(M_3)$ and the right actions of $A(M_1)$.
\end{proof}

\subsection{Proof of Corollary~\ref{cor:OpenTQFT}}\label{sec:CorollaryProof}

If $A$ and $B$ are superalgebras over $\Z$ and $f \colon A \to B$ is a homomorphism of superalgebras (an even map respecting multiplication, units, and any additional gradings), let $X_f$ be the bimodule over $(B,A)$ defined by $X_f := B$ with actions specified by
\[
b \cdot x \cdot a := bxf(a)
\]
for $b,x \in B$ and $a \in A$. The correspondence $f \mapsto X_f$ gives a functor from the category of superalgebras and homomorphisms to the category of superalgebras and bimodules, in which morphisms from $A$ to $B$ are isomorphism classes of bimodules over $(B,A)$.

\begin{definition}\label{def:SAlgZ}
We let $\SAlg_{\Z}$ denote the symmetric monoidal category with:
\begin{itemize}
\item Objects: $\Z$-graded superalgebras over $\Z$ (i.e. $\Z$-graded super rings).
\item Morphisms from $A$ to $B$: $\Q$-graded bimodules over $(B, A)$ up to isomorphism, with composition given by tensor product over the superalgebra in the middle and the identity on $A$ given by $A$ as a bimodule over itself.
\item Tensor product on objects: tensor product $A_1 \otimes_{\Z} A_2$ of superalgebras.
\item Tensor product on morphisms: if $X$ is a bimodule over $(B_1,A_1)$ and $Y$ is a bimodule over $(B_2,A_2)$, define their tensor product to be $X \otimes_{\Z} Y$ with bimodule structure specified by
\begin{align*}
&(b_1 \otimes b_2) \cdot (x \otimes y) \cdot (a_1 \otimes a_2) \\
&:= (-1)^{|b_2||x| +|a_1||y| + |b_2||a_1|} (b_1 \cdot x \cdot a_1) \otimes (b_2 \cdot y \otimes a_2).
\end{align*}
This tensor product operation is well-defined on isomorphism classes of bimodules.
\item Monoidal unit: $\Z$ as a superalgebra (purely even and concentrated in degree zero).
\item Associator $\alpha_{A_1,A_2,A_3}$: isomorphism class of the bimodule $X_{\alpha}$ where
\[
\alpha \colon A_1 \otimes (A_2 \otimes A_3) \to (A_1 \otimes A_2) \otimes A_3
\]
is the canonical isomorphism of superalgebras.
\item Left unitor $\lambda_A$: isomorphism class of the bimodule $X_{\lambda}$ where
\[
\lambda \colon \Z \otimes A \to A
\]
is the canonical isomorphism of superalgebras. Right unitors are defined similarly.
\item Symmetrizer $\sigma_{A_1,A_2}$: isomorphism class of the bimodule $X_{\sigma}$ where
\[
\sigma \colon A_1 \otimes A_2 \to A_2 \otimes A_1
\]
is the canonical isomorphism of superalgebras specified by 
\[
\sigma(a_a \otimes a_2) = (-1)^{|a_1||a_2|} a_2 \otimes a_1.
\]
\end{itemize}
The coherence conditions for $\SAlg_{\Z}$ can be checked as identities of homomorphisms between superalgebras; it follows that they hold when passing from homomorphisms $f$ to isomorphism classes of bimodules $X_f$.
\end{definition}

\begin{proof}[Proof of Corollary~\ref{cor:OpenTQFT}]
By Theorem~\ref{thm:MainIntro}, the assignments in the statement of the corollary are compatible with composition. By Example~\ref{ex:TensorShifts} and Proposition~\ref{prop:OpenPantsTensor} with $p=1$, the identity cobordism on a single interval is sent to the identity bimodule over $\Z[E]/(E^2)$; because $\delta$, $\pi$, and $H_1(F,S_+)$ are additive with respect to disjoint unions, it follows that the identity cobordism on an object $M$ is sent to the identity bimodule over $A(M)$ in general.

For the monoidal functor coherence maps, we need an isomorphism class of bimodules $\varepsilon$ over $(\Zb^{S_+}_{\delta,\pi}(\varnothing),\Z)$ where $\varnothing$ is the monoidal unit of $\Cob^{\ext}_{\open}$, and for all pairs of objects $(M,M')$ of $\Cob^{\ext}_{\open}$, we need an isomorphism class of bimodules $\mu_{M,M'}$ over $(\Zb^{S_+}_{\delta,\pi}(M \sqcup M'), \Zb^{S_+}_{\delta,\pi}(M) \otimes_{\Z} \Zb^{S_+}_{\delta,\pi}(M'))$. Since $\varnothing$ has no interval components, $\Zb^{S_+}_{\delta,\mu}(\varnothing)$ is the empty tensor product of copies of $\Z[E]/(E^2)$, which is the monoidal unit $\Z$ of $\SAlg_{\Z}$ and we can take $\varepsilon$ to be the isomorphism class of $\Z$ as a bimodule over itself. Let $M$ and $M'$ be objects of $\Cob^{\ext}_{\open}$; define $\mu_{M,M'}$ to be the isomorphism class of the bimodule $X_{\mu}$ where 
\[
\mu \colon A(M) \otimes A(M') \to A(M \sqcup M')
\]
is the forget-the-parentheses isomorphism. To show that $\mu_{M,M'}$ is natural in $M$ and $M'$, let $F$ and $F'$ be morphisms in $\Cob^{\ext}_{\open}$ from $M_1$ to $M_2$ and from $M'_1$ to $M'_2$ respectively. We want to show that
\[
X := (\mu_{M_2, M'_2}) \otimes_{A(M_2) \otimes A(M'_2)} (\Zb^{S_+}_{\delta,\pi}(F) \otimes_{\Z} \Zb^{S_+}_{\delta,\pi}(F')) 
\]
and
\[
Y := \Zb^{S_+}_{\delta,\pi}(F \sqcup F') \otimes_{A(M_1 \sqcup M'_1)} (\mu_{M_1, M'_1})
\]
are isomorphic as bimodules over $(A(M_2 \sqcup M'_2), A(M_1) \otimes A(M'_1))$. Indeed, the additivity of $\delta$, $\pi$, and $H_1(F,S_+)$ with respect to disjoint unions gives us an isomorphism of $\Q$-graded abelian groups; compatibility with the algebra actions follows from looking at each multiplicative generator $E_I$ of the algebras separately, so that parenthesization and the $\mu$ bimodules are irrelevant.

Compatibility between $\mu_{M,M'}$ and the associators of $\Cob^{\ext}_{\open}$ and $\SAlg_{\Z}$ holds at the level of the algebra homomorphisms underlying all maps in the compatibility square, where it amounts to saying that two forget-the-parentheses isomorphisms from $(A(M_1) \otimes A(M_2)) \otimes A(M_3)$ to $A(M_1 \sqcup (M_2 \sqcup M_3))$ are equal. Thus, it holds for the bimodules up to isomorphism. Compatibility between $\mu_{M,M'}$, $\varepsilon$, and the unitors of $\SAlg_{\Z}$ and $\Cob^{\ext}_{\open}$ also holds at the level of algebra homomorphisms; both ways around the left unitor compatibility square of homomorphisms give the left multiplication map from $\Z \otimes A(M)$ to $A(M)$, and similarly for the right unitor square of homomorphisms. Thus, compatibility with unitors holds at the level of bimodules up to isomorphism.

Finally, to see that $\mu_{M,M'}$ is compatible with the symmetrizers of $\SAlg_{\Z}$, let $M_1$ and $M_2$ be objects of $\Cob^{\ext}_{\open}$. The symmetrizer $\sigma_{M_1,M_2}$ of $M_1$ and $M_2$ in $\Cob^{\ext}_{\open}$ can be described as $(M_1 \sqcup M_2) \times [0,1]$ viewed as a cobordism from the $t=1$ slice $M_1 \sqcup M_2$ (in that order) on the right to the $t=0$ slice $M_2 \sqcup M_1$ (in that order) on the left. In other words, it is the identity cobordism on $M_1 \sqcup M_2$ but with the order of the disjoint union reversed in its target, which becomes $M_2 \sqcup M_1$. By Example~\ref{ex:TensorShifts} and Proposition~\ref{prop:OpenPantsTensor} with $p=1$, we can identify $\Zb^{S_+}_{\delta,\pi}(\sigma_{M_1,M_2})$ with $A(M_1) \otimes A(M_2)$ with its usual right action of $A(M_1) \otimes A(M_2)$ by multiplication and with left action of $a_2 \otimes a_1 \in A(M_2) \otimes A(M_1)$ given by multiplication by $(-1)^{|a_1||a_2|} a_1 \otimes a_2$. It follows that $\Zb^{S_+}_{\delta,\pi}(\sigma_{M_1,M_2})$ is the symmetrizer of $(A(M_1), A(M_2))$ in $\SAlg_{\Z}$ (more precisely, the compatibility square involving symmetrizers and the monoidal coherence isomorphisms commutes, but we have suppressed explicit mention of the monoidal coherence isomorphisms like $A(M_1 \sqcup M_2) \cong A(M_1) \otimes A(M_2)$).
\end{proof}

\section{Constraints on degree and parity}\label{sec:DegreeParityConstraints}

We conclude with a brief discussion of why we chose the particular parametrized expressions \eqref{eq:DeltaDef} and \eqref{eq:PiDef} for the degree and parity shifts. In Mikhaylov \cite[5.2.2]{Mikhaylov}, the degree shift on the state space built from $\wedge^* H^1$ is proportional to the dimension of $H^1$, and the same is true in \cite[Section 3.2]{GeerYoung}. Analogously, for us the quantity $h = \rk H_1(F,S_+)$ and its combinatorial formula give a reasonable starting point for defining degree and parity shifts. In \eqref{eq:hFormula}, $h$ depends on various pieces of data associated to a sutured surface:
\begin{itemize}
    \item Number $k_1$ of components
    \item Genus $k_2$ (sum over all components)
    \item Number $k_3$ of closed components
    \item Number $k_4$ of non-closed components without $S_+$
    \item Number $k_5$ of non-closed components without $S_-$
    \item Number $k_6$ of $S_+$ intervals
    \item Number $k_7$ of $S_+$ circles 
    \item Number $k_8$ of $S_-$ circles
    \item Number $k_9$ of boundary circles of $F$ with both $S_+$ and $S_-$
\end{itemize}
The above parameters $k_1,\ldots,k_9$ do not satisfy any linear relations as we vary the sutured surface, and on the other hand other relevant data can be computed from $k_1,\ldots,k_9$. For example, the number of components of $F$ intersecting both $S_+$ and $S_-$ is $k_1 - k_3 - k_4 - k_5$, the number of $S_-$ intervals is $k_6$, and the number of boundary components of $F$ is $k_7 + k_8 + k_9$. Similarly, \eqref{eq:hFormula} gives $h$ as a linear combination of the $k_i$.

From the proof of Lemma~\ref{lem:MainGluing}, one can deduce that if the grading shift function $\delta$ has a general formula sending $F$ to some linear combination
\[
C_1 k_1 + \cdots + C_9 k_9
\]
of the $k_i$, then $\delta$ is consistent with Lemma~\ref{lem:MainGluing} if and only if the following system of equations is satisfied:
\begin{align}
    -C_1 + C_4 - 2C_6 + C_8 - 2C_9 &= 0 \quad \textrm{ (Case 1-1)} \label{it:Req1} \\ 
    -C_1 -2C_6 -C_9 &= 1 \quad \textrm{ (Case 1-2 or 1-3, no $S_-$ circle created)} \label{it:Req2} \\
    -C_1 -2C_6 + C_8 - 2C_9 &= 1 \quad \textrm{ (Cases 1-2 or 1-3, one $S_-$ circle created)}\label{it:Req2.5} \\
    -2C_6 + C_9 &= 1 \quad \textrm{ (Case 2-1a, no $S_-$ circle created)} \label{it:Req3} \\
    -2C_6 + C_8 &= 1 \quad \textrm{ (Case 2-1a, one $S_-$ circle created)} \label{it:Req4} \\
    -2C_6 + 2C_8 - C_9 &= 1 \quad \textrm{ (Case 2-1a, two $S_-$ circles created)} \label{it:Req5} \\
    C_4 - 2C_6 + 2C_8 - C_9 &= 0 \quad \textrm{ (Case 2-1b)} \label{it:Req6} \\
    C_2 - 2C_6 - C_9 &= 1 \quad \textrm{ (Case 2-2a, no $S_-$ circle created)} \label{it:Req7} \\
    C_2 - 2C_6 + C_8 - 2C_9 &= 1 \quad \textrm{ (Case 2-2a, one $S_-$ circle created)} \label{it:Req8} \\
    C_2 + C_4 - 2C_6 + C_8 - 2C_9 &= 0 \quad \textrm{ (Case 2-2b)} \label{it:Req9}
\end{align}

Subtracting \eqref{it:Req3} and \eqref{it:Req4}, we get $C_8 = C_9$, so \eqref{it:Req2} and \eqref{it:Req2.5} become the same equation and \eqref{it:Req3}, \eqref{it:Req4}, and \eqref{it:Req5} become the same equation. This second equation can be solved to give $C_6 = (C_9 - 1)/2$. Subtracting \eqref{it:Req2} and \eqref{it:Req3}, we get $2C_9 + C_1 = 0$, so that $C_1 = - 2C_9$. Now \eqref{it:Req6} becomes
\[
C_4 - (C_9 - 1) + 2C_9 - C_9 = 0
\]
which gives $C_4 = -1$. Simplifying \eqref{it:Req1}, we get
\[
2C_9 - 1 - (C_9 - 1) + C_9 - 2C_9 = 0
\]
which is now tautological. For the final three equations, \eqref{it:Req7} and \eqref{it:Req8} both become
\[
C_2 - (C_9 - 1) - C_9 = 1
\]
or equivalently $C_2 = 2C_9$ ; substituting this result into \eqref{it:Req9} we get
\[
2C_9 - 1 - (C_9 - 1) + C_9 - 2C_9 = 0
\]
which is tautological. Thus, in terms of the coefficients $C_i$, to solve the above system of equations we can treat $C_3, C_5, C_7$, and $C_9$ as free parameters, with the other parameters determined by these four. Now, using \eqref{eq:hFormula} we have
\[
C_9 h = -2C_9 k_1 + 2C_9 k_2 +2C_9 k_3 + C_9 k_4 + C_9 k_5 + C_9 k_6 + C_9 k_7 + C_9 k_8 + C_9 k_9.
\]
We can thus express our linear combination
\[
-2C_9 k_1 + 2C_9 k_2 + C_3 k_3 - k_4 + C_5 k_5 + ((C_9 - 1)/2)k_6 + C_7 k_7 + C_9 k_8 + C_9 k_9
\]
as
\[
C_9 h + C_3 k_3 + (-C_9 - 1)k_4 + C_5 k_5 + ((-C_9-1)/2)k_6 + C_7 k_7
\]
where the parameters $C_3,C_5,C_7$ have absorbed the coefficients of $k_3,k_5,k_7$ in $h$. Rewriting $(C_3,C_5,C_7,C_9)$ as $(A_3, A_2, A_4, -A_1)$, our linear combination becomes
\begin{equation}\label{eq:DeltaDefAgain}
-A_1 h + A_3 k_3 + (A_1 - 1)k_4 + A_2 k_5 + ((A_1 - 1)/2)k_6 + A_4 k_7,
\end{equation}
which for $(A_1,A_2,A_3,A_4) \in \Q^4$ is the general degree shift compatible with Lemma~\ref{lem:MainGluing}. This computation motivated our definition of $\delta_{A_1,A_2,A_3,A_4}$ in \eqref{eq:DeltaDef}.

For the parity shift, one can ask when \eqref{eq:DeltaDefAgain} gives an element of $\Z \subset \Q$. If this is true for all sutured surfaces $F$, then taking $F$ to be a disk with boundary consisting of one $S_+$ interval and one $S_-$ interval, we get $(A_1 - 1)/2 \in \Z$, so $A_1 = 2N + 1$ for some $N \in \Z$. Substituting into \eqref{eq:DeltaDefAgain} gives
\[
-(2N + 1)h + A_3 k_3 + 2N k_4 + A_2 k_5 + N k_6 + A_4 k_7,
\]
which modulo $2$ is equal to
\[
-h + A_3 k_3 + A_2 k_5 + N k_6 + A_4 k_7.
\]
Taking $F = S^2$, we get $A_3 \in \Z$, taking $F = S^1 \times [0,1]$ with both boundary components in $S_+$ we get $A_2 \in \Z$, and taking $F$ to be $(S^1 \times [0,1]) \sqcup D^2$ with all boundary components in $S_+$ we get $A_4 \in \Z$. In this way we arrive at \eqref{eq:PiDef}.

\bibliographystyle{alpha}
\bibliography{biblio.bib}

\newcommand{\etalchar}[1]{$^{#1}$}
\begin{thebibliography}{BCGPM16}

\bibitem[ADO92]{ADO}
Y.~Akutsu, T.~Deguchi, and T.~Ohtsuki.
\newblock Invariants of colored links.
\newblock {\em J. Knot Theory Ramifications}, 1(2):161--184, 1992.

\bibitem[AGPS18]{AGPS}
N.~Aghaei, A.~M. Gainutdinov, M.~Pawelkiewicz, and V.~Schomerus.
\newblock Combinatorial {Q}uantisation of {GL}$(1|1)$ {C}hern-{S}imons {T}heory
  {I}: {T}he {T}orus, 2018.
\newblock \href{https://arxiv.org/abs/1811.09123}{arXiv:1811.09123}.

\bibitem[BCGPM16]{BCGPM}
C.~Blanchet, Francesco C., N.~Geer, and B.~Patureau-Mirand.
\newblock Non-semi-simple {TQFT}s, {R}eidemeister torsion and {K}ashaev's
  invariants.
\newblock {\em Adv. Math.}, 301:1--78, 2016.
\newblock \href{https://arxiv.org/abs/1404.7289}{arXiv:1404.7289}.

\bibitem[CGP21]{CGP-NSS}
F.~Costantino, S.~Gukov, and P.~Putrov.
\newblock Non-semisimple {TQFT}'s and {BPS} q-series, 2021.
\newblock \href{https://arxiv.org/abs/2107.14238}{arXiv:2107.14238}.

\bibitem[CGPM14]{CGPM}
F.~Costantino, N.~Geer, and B.~Patureau-Mirand.
\newblock Quantum invariants of 3-manifolds via link surgery presentations and
  non-semi-simple categories.
\newblock {\em J. Topol.}, 7(4):1005--1053, 2014.
\newblock \href{https://arxiv.org/abs/1202.3553}{arXiv:1202.3553}.

\bibitem[DM14]{DM}
C.~L. Douglas and C.~Manolescu.
\newblock On the algebra of cornered {F}loer homology.
\newblock {\em J. Topol.}, 7(1):1--68, 2014.
\newblock \href{https://arxiv.org/abs/1105.0113}{arXiv:1105.0113}.

\bibitem[Don99]{DonaldsonTQFT}
S.~K. Donaldson.
\newblock Topological field theories and formulae of {C}asson and
  {M}eng-{T}aubes.
\newblock In {\em Proceedings of the {K}irbyfest ({B}erkeley, {CA}, 1998)},
  volume~2 of {\em Geom. Topol. Monogr.}, pages 87--102. Geom. Topol. Publ.,
  Coventry, 1999.
\newblock \href{https://arxiv.org/abs/math/9911248}{arXiv:math/9911248}.

\bibitem[FN91]{FrohmanNicas}
C.~Frohman and A.~Nicas.
\newblock The {A}lexander polynomial via topological quantum field theory.
\newblock In {\em Differential geometry, global analysis, and topology
  ({H}alifax, {NS}, 1990)}, volume~12 of {\em CMS Conf. Proc.}, pages 27--40.
  Amer. Math. Soc., Providence, RI, 1991.

\bibitem[GHN{\etalchar{+}}21]{GHNPPS}
S.~Gukov, P.-S. Hsin, H.~Nakajima, S.~Park, D.~Pei, and N.~Sopenko.
\newblock Rozansky-{W}itten geometry of {C}oulomb branches and logarithmic knot
  invariants.
\newblock {\em J. Geom. Phys.}, 168:Paper No. 104311, 22, 2021.
\newblock \href{https://arxiv.org/abs/2005.05347}{arXiv:2005.05347}.

\bibitem[GPPV20]{GPPV}
S.~Gukov, D.~Pei, P.~Putrov, and C.~Vafa.
\newblock B{PS} spectra and 3-manifold invariants.
\newblock {\em J. Knot Theory Ramifications}, 29(2):2040003, 85, 2020.
\newblock \href{https://arxiv.org/abs/1701.06567}{arXiv:1701.06567}.

\bibitem[GPV17]{GPV}
S.~Gukov, P.~Putrov, and C.~Vafa.
\newblock Fivebranes and 3-manifold homology.
\newblock {\em J. High Energy Phys.}, (7):071, front matter+80, 2017.
\newblock \href{https://arxiv.org/abs/1602.05302}{arXiv:1602.05302}.

\bibitem[GY22]{GeerYoung}
N.~Geer and M.~B. Young.
\newblock Three dimensional topological quantum field theory from
  ${U}_q(\mathfrak{gl}(1|1))$ and ${U}(1|1)$ {C}hern--{S}imons theory, 2022.
\newblock \href{https://arxiv.org/abs/2210.04286}{arXiv:2210.04286}.

\bibitem[HLW17]{HLW}
J.~Hom, T.~Lidman, and L.~Watson.
\newblock The {A}lexander module, {S}eifert forms, and categorification.
\newblock {\em J. Topol.}, 10(1):22--100, 2017.
\newblock \href{https://arxiv.org/abs/1501.04866}{arXiv:1501.04866}.

\bibitem[Jag22]{Jagadale}
M.~Jagadale.
\newblock Decorated {TQFT}s and their {H}ilbert {S}paces, 2022.
\newblock \href{https://arxiv.org/abs/2206.14967}{arXiv:2206.14967}.

\bibitem[Ker03]{KerlerHomologyTQFT}
T.~Kerler.
\newblock Homology {TQFT}'s and the {A}lexander-{R}eidemeister invariant of
  3-manifolds via {H}opf algebras and skein theory.
\newblock {\em Canad. J. Math.}, 55(4):766--821, 2003.
\newblock \href{https://arxiv.org/abs/math/0008204}{arXiv:math/0008204}.

\bibitem[Kho14]{KhovOneHalf}
M.~Khovanov.
\newblock How to categorify one-half of quantum {${\mathfrak{gl}}(1|2)$}.
\newblock In {\em Knots in {P}oland {III}. {P}art {III}}, volume 103 of {\em
  Banach Center Publ.}, pages 211--232. Polish Acad. Sci. Inst. Math., Warsaw,
  2014.
\newblock \href{https://arxiv.org/abs/1007.3517}{arXiv:1007.3517}.

\bibitem[LOT18]{LOTBorderedOrig}
R.~Lipshitz, P.~S. Ozsv\'ath, and D.~P. Thurston.
\newblock Bordered {H}eegaard {F}loer homology.
\newblock {\em Mem. Amer. Math. Soc.}, 254(1216):viii+279, 2018.
\newblock \href{https://arxiv.org/abs/0810.0687}{arXiv:0810.0687}.

\bibitem[LP08]{LaudaPfeiffer}
A.~D. Lauda and H.~Pfeiffer.
\newblock Open-closed strings: two-dimensional extended {TQFT}s and {F}robenius
  algebras.
\newblock {\em Topology Appl.}, 155(7):623--666, 2008.
\newblock \href{https://arxiv.org/abs/math/0510664}{arXiv:math/0510664}.

\bibitem[Man]{ManionOpenClosed}
A.~Manion.
\newblock Decategorified {H}eegaard {F}loer theory and actions of both {E} and
  {F}.
\newblock In preparation.

\bibitem[Man22]{ManionDHA}
A.~Manion.
\newblock On the decategorification of some higher actions in {H}eegaard
  {F}loer homology, 2022.
\newblock \href{https://arxiv.org/abs/2203.00094}{arXiv:2203.00094}.

\bibitem[Mik15]{Mikhaylov}
V.~Mikhaylov.
\newblock Analytic {T}orsion, 3d {M}irror {S}ymmetry {A}nd {S}upergroup
  {C}hern-{S}imons {T}heories, 2015.
\newblock \href{https://arxiv.org/abs/1505.03130}{arXiv:1505.03130}.

\bibitem[MR20]{ManionRouquier}
A.~Manion and R.~Rouquier.
\newblock Higher representations and cornered {H}eegaard {F}loer homology,
  2020.
\newblock \href{https://arxiv.org/abs/2009.09627}{arXiv:2009.09627}.

\bibitem[Pet18]{PetkovaDecat}
I.~Petkova.
\newblock The decategorification of bordered {H}eegaard {F}loer homology.
\newblock {\em J. Symplectic Geom.}, 16(1):227--277, 2018.
\newblock \href{https://arxiv.org/abs/1212.4529}{arXiv:1212.4529}.

\bibitem[Zar11]{Zarev}
R.~Zarev.
\newblock {\em Bordered {S}utured {F}loer {H}omology}.
\newblock ProQuest LLC, Ann Arbor, MI, 2011.
\newblock Thesis (Ph.D.)--Columbia University.
  \href{https://arxiv.org/abs/0908.1106}{arXiv:0908.1106}.

\end{thebibliography}

\end{document}